\newcommand*{\R}{{\mathbb R}}
\providecommand*{\vint}[1]{\mathchoice
          {\mathop{\vrule width 5pt height 3 pt depth -2.5pt
                  \kern -9pt \kern 1pt\intop}\nolimits_{\kern -5pt{#1}}}
          {\mathop{\vrule width 5pt height 3 pt depth -2.6pt
                  \kern -6pt \intop}\nolimits_{\kern -3pt{#1}}}
          {\mathop{\vrule width 5pt height 3 pt depth -2.6pt
                  \kern -6pt \intop}\nolimits_{\kern -3pt{#1}}}
          {\mathop{\vrule width 5pt height 3 pt depth -2.6pt
                  \kern -6pt \intop}\nolimits_{\kern -3pt{#1}}}}
\DeclareMathOperator{\diam}{diam}
\numberwithin{equation}{section}
\theoremstyle{plain}
\newtheorem{thm}[equation]{Theorem}
\newtheorem{lem}[equation]{Lemma}
\theoremstyle{definition}
\newtheorem{defn}[equation]{Definition}
\newtheorem{remark}[equation]{Remark}
\begin{document}

\title[Carrasco's theorem on quasisymmetric maps]
{On Carrasco Piaggio's theorem characterizing quasisymmetric maps from compact doubling spaces to Ahlfors regular
spaces} 
\author{Nageswari Shanmugalingam}
\address{Department of Mathematical Sciences, P.O.~Box 210025, University of Cincinnati, Cincinnati, OH~45221-0025, U.S.A.}
\email{shanmun@uc.edu}
\thanks{This material is motivated by the series of learning seminars 
during the author's stay at the 
Mathematical Sciences Research Institute (MSRI, Berkeley, CA) while she was resident there as a member of  the program
\emph{Analysis and Geometry in Random Spaces} which is 
 supported by the National Science Foundation (NSF U.S.A.) under Grant No. 1440140, during Spring 2022. 
 The author thanks MSRI for its kind hospitality, and Mario Bonk, Mathav Murugan, 
 and Pekka Pankka for valuable discussions on~\cite{Car} and for comments that helped improve the exposition of the paper.
 The author's work is partially supported by the NSF (U.S.A.) grant DMS~\#2054960.}
\maketitle

\begin{center}
Dedicated to the memory of Professor David R.~Adams.\\
\end{center}

\begin{abstract}
In this note we deconstruct and explore the components of a theorem of Carrasco Piaggio, which relates Ahlfors regular
conformal gauge of a compact doubling metric space to weights on Gromov-hyperbolic fillings of the metric space.
We consider a construction of hyperbolic filling that is simpler than the one considered by Carrasco Piaggio, and
we determine the effect of each of the four properties postulated by Carrasco Piaggio on the induced metric on
the compact metric space. 
\end{abstract}

\noindent
    {\small \emph{Key words and phrases}:
Gromov hyperbolic filling, uniformization, 
metric space, quasisymmetry, Ahlfors regular, uniformly perfect, conformal change in metric.}

\medskip

\noindent
    {\small Mathematics Subject Classification (2020):
Primary:
30L05.
Secondary:
30L10, 51F30, 53C23.
}

\section{Introduction}

Within the class of metric spaces, those that are Gromov hyperbolic possess the properties of negative curvature
at large scale but are not concerned with small-scale behavior; and as such, Gromov hyperbolicity is stable under
biLipschitz changes in the metric (unlike Alexandrov curvature conditions). First proposed as a structure useful in the study of
Cayley graphs of hyperbolic groups~\cite{Gro}, the study of Gromov hyperbolic spaces was subsequently found
to be useful in the study of potential theory~\cite{BBS2}. It is also connected to the study of 
metric geometry, as there is a close connection between
Gromov hyperbolic spaces and uniform domains~\cite{BHK}, and between rough quasiisometries between Gromov
hyperbolic spaces and quasisymmetries between their visual boundaries. It is this latter connection that
is explored further in~\cite{Car}, and is based on the fact that every compact doubling metric space is the
boundary of a Gromov hyperbolic space, called hyperbolic filling, of the space.
Now there is extensive literature on various uses of hyperbolic filling, dating back to the seminal paper of
Gromov~\cite[page~95]{Gro}, and made explicit in~\cite{BBS, BoSa, BoSc, BP, BuSch, Car, Ibr, KM, KSS, Kig, Kig2, L};
these are merely a sampling of current literature on the topic of Gromov hyperbolicity and hyperbolic filling.

During the author's stay at MSRI, there was an extensive discussion of the paper~\cite{Car} characterizing metrics
on a compact space that are quasisymmetrically equivalent and at least one of them an Ahlfors regular metric. 
The results of~\cite{Car} were of great interest to many participants at MSRI. However,
the complicated system of parameters used there made it difficult to see the underlying beautiful ideas in~\cite{Car}.
The goal of the current note
is to deconstruct the role of some of the parameters in used there, and to eliminate others, thus providing a
simplified expository 
discourse on parts of~\cite{Car}. The focus is on~\cite[Theorem~1.1]{Car}. The following theorem is the result of 
exploring the role of each of the conditions~(H1)--(H4) assumed in~\cite{Car}.

\begin{thm}\label{thm:main}
Let $(Z,d_Z)$ be a compact doubling metric space. Fixing $\alpha\ge 2$, and $\tau\ge 2\alpha^2+1$, we choose
a hyperbolic filling $X$ of $Z$ associated with the parameters $\alpha$ and $\tau$ as in Definition~\ref{rem:propties-hyp}.
\begin{enumerate}
\item[{\bf I.}] Suppose that $\rho:X\to(0,1)$, and consider the function $d_\rho$ on $X\times X$ associated with $\rho$ as in
Definition~\ref{def:rho-metr}. 
\begin{enumerate}
\item If $\rho$ satisfies Condition~(H1) of Definition~\ref{def:H1-3}, then $d_\rho$ is a metric on $X$, with
$(X,d_\rho)$ a locally compact, non-complete metric space. Let $\partial_\rho X:=\overline{X}\setminus X$, with
$\overline{X}$ the completion of $X$ with respect to the metric $d_\rho$.
\item If $\rho$ satisfies Conditions~(H1) and~(H3) of Definition~\ref{def:H1-3}, then there is a homeomorphism
$\Phi:Z\to\partial_\rho X$ and positive constants $c, C$ such that for every $x,y\in Z$ we have
\[
 c\, d_Z(x,y)^{\tau_-}\le d_\rho(\Phi(x),\Phi(y))\le C\, d_Z(x,y)^{\tau_+}
\]
with 
\[
\tau_-:=\frac{\log(\eta_-)}{\log(1/\alpha)}, \qquad \tau_+:=\frac{\log(\eta_+)}{\log(1/\alpha)}.
\]
\item If $\rho$ satisfies Conditions~(H1), (H2), and~(H3) of Definition~\ref{def:H1-3}, then the map $\Phi$ is a
quasisymmetry.
\item If $\rho$ satisfies Conditions~(H1), (H2), and~(H3) of Definition~\ref{def:H1-3} and Condition~(H4) of
Definition~\ref{def:H4}, then $(\partial_\rho X,d_\rho)$ is Ahlfors $p$-regular.
\end{enumerate}
\item[{\bf II.}] Conversely, suppose that $Z$ is $C_U$-uniformly perfect for some $C_U>2$, and $\alpha>C_U^3$ with
$\tau\ge \max\{\alpha^2+1,2C_U^3(C_U^2-4)^{-1}\}$. If
$\theta$ is any metric on $Z$ for which $(Z,\theta)$ is Ahlfors $p$-regular and 
is quasisymmetric to $(Z,d_Z)$, then there exists a function $\rho:X\to(0,1)$ that 
satisfies Conditions~(H1), (H2), (H3), and~(H4).
\end{enumerate}
\end{thm}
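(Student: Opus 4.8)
\medskip
\noindent
The plan for Part~II is to build $\rho$ directly out of $\theta$ and then to recognise (H1)--(H4) as the familiar geometric features of $\theta$, read off along the scales $\alpha^{-n}$ of the filling. Rescale $\theta$ once and for all so that $\diam_\theta Z\le1$; this affects neither the Ahlfors $p$-regularity nor the quasisymmetry class. Write $f$ for the identity viewed as a map $(Z,d_Z)\to(Z,\theta)$, which is $\eta$-quasisymmetric by hypothesis. The crucial preliminary is that, since $(Z,d_Z)$ is $C_U$-uniformly perfect, $f$ is automatically \emph{power} quasisymmetric: one may take $\eta(t)\le C_0\max\{t^{\lambda},t^{1/\lambda}\}$ with $C_0\ge1$ and $\lambda\in(0,1]$ depending only on $C_U$ and the original $\eta$ (Tukia--V\"{a}is\"{a}l\"{a}). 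Consequently $(Z,\theta)$ is uniformly perfect as well, and, being moreover Ahlfors $p$-regular, it satisfies
\[
\diam_\theta(B)\simeq\big(\Hcal^p_\theta(B)\big)^{1/p}\qquad\text{for every $d_Z$-metric ball }B\subset Z,
\]
with constants depending only on the data. (The lower bound here uses uniform perfectness of $Z$, to fit a $\theta$-ball of radius $\simeq\diam_\theta(B)$ inside $B$; the upper bound is immediate from Ahlfors regularity.) For a vertex $v$ of $X$ with $\ell(v)=n$ and associated ball $B_v=B_{d_Z}(z_v,\alpha^{-n})$ (notation as in Definition~\ref{rem:propties-hyp}), set
\[
\rho(v):=\tfrac12\,\diam_\theta(B_v),
\]
which lies in $(0,1)$ (positive because $Z$ is uniformly perfect, at most $\tfrac12$ because $\diam_\theta Z\le1$); by the displayed equivalence one may equally well take $\rho(v)=c_0\big(\Hcal^p_\theta(B_v)\big)^{1/p}$, a variant making (H4) more transparent.

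Conditions (H1) and (H2) should follow from power quasisymmetry applied across neighbouring scales. For (H1): if $\{v,w\}$ is an edge with $\ell(w)=\ell(v)+1$, the adjacency rules of Definition~\ref{rem:propties-hyp} place $z_w$ within a bounded multiple of $\alpha^{-n}$ of $z_v$, so $B_w\subset NB_v$ for a dilation constant $N=N(\alpha,\tau)$, while $\diam_{d_Z}(B_w)\lesssim\alpha^{-1}\diam_{d_Z}(B_v)$; feeding this small ratio into the power estimate gives $\rho(w)\le\eta_+\rho(v)$ with $\eta_+\lesssim(C_U/\alpha)^{\lambda}$, and the hypothesis $\alpha>C_U^{3}$ --- together with the way $C_0$ and $\lambda$ degrade in $C_U$ --- is exactly what forces $\eta_+<1$, which is what (H1) asks for (and what makes $d_\rho$ a non-complete metric in Part~I). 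For (H2): if $v,w$ are neighbours at a common level then $B_v\cup B_w$ lies in a single $d_Z$-ball whose diameter is comparable to $\diam_{d_Z}(B_v)\simeq\diam_{d_Z}(B_w)$ (uniform perfectness of $Z$ bounding these below by $\simeq\alpha^{-n}$), and power quasisymmetry returns $\rho(v)\simeq\rho(w)$ with a uniform constant, i.e.\ (H2).

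For (H3), one needs the matching \emph{lower} ratio estimate $\rho(w)\ge\eta_-\rho(v)$ along child-edges, $\eta_->0$: this comes from running the power estimate in the other direction, using uniform perfectness of $(Z,d_Z)$ to place two points of $\tfrac12 B_w$ at $d_Z$-distance $\gtrsim\alpha^{-\ell(w)}$, whence $\rho(w)\gtrsim\alpha^{-1/\lambda}\rho(v)$. Combined with uniform perfectness of $(Z,\theta)$ this rules out ``pinching'' along descending chains --- the $d_\rho$-shadows of two distinct children of $v$ stay $d_\rho$-separated by $\gtrsim\rho(v)$ --- so $\Phi$ identifies $Z$ homeomorphically with $\partial_\rho X$, the constants $\eta_\pm$ being those of the bi-H\"older estimate in Part~I(b); the quantitative hypothesis $\tau\ge 2C_U^{3}(C_U^{2}-4)^{-1}$ is what guarantees that every $B_v$ does possess children whose balls carry the uniform-perfectness structure at the next scale. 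Finally (H4), which involves the exponent $p$, should amount to the self-similar mass bound $\sum_{w}\rho(w)^{p}\simeq\rho(v)^{p}$ with $w$ ranging over the level-$(\ell(v)+1)$ vertices joined to $v$. The balls $\{B_w\}_w$ cover $B_v$ (by the adjacency rule, every level-$(\ell(v)+1)$ net point meeting $B_v$ is a neighbour of $v$), are each contained in $NB_v$, and have bounded overlap (doubling of $(Z,d_Z)$ together with the $\alpha^{-(\ell(v)+1)}$-separation of the net), so that
\[
\Hcal^p_\theta(B_v)\le\sum_{w}\Hcal^p_\theta(B_w)\le C\,\Hcal^p_\theta(NB_v).
\]
Since $NB_v$ and $B_v$ are $d_Z$-metric balls of comparable radius, uniform perfectness gives $\Hcal^p_\theta(NB_v)\simeq\Hcal^p_\theta(B_v)$, and the displayed equivalence turns all three quantities into $\simeq\rho(v)^p$ and the sum into $\simeq\sum_w\rho(w)^p$, which is (H4).

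The main obstacle is not any single step but the \emph{quantitative} bookkeeping of constants, and this is exactly where the numerology of Theorem~\ref{thm:main} is spent. Two points demand real care. First, the strict inequality $\eta_+<1$ in (H1): this is why $\alpha$ must exceed $C_U^{3}$ and $\tau$ must be as large as prescribed, since the next-level balls have to sit deeply enough inside their parents that the power-quasisymmetric contraction factor drops below $1$, while the very exponent $\lambda$ and constant $C_0$ that govern that factor themselves deteriorate as $C_U$ grows, so the two effects must be balanced (and it may well be that a slightly smoothed choice of $\rho$ --- say, averaging $\diam_\theta$ over a bounded neighbourhood of $v$ in $X$, or the measure-theoretic variant $\rho(v)=c_0(\Hcal^p_\theta(B_v))^{1/p}$ --- is needed to make the constants respect the stated bounds on $\alpha$ and $\tau$, though the scheme of the argument is the same). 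Second, the equivalence $\diam_\theta(B)\simeq(\Hcal^p_\theta(B))^{1/p}$ for $d_Z$-metric balls $B$, which is the one place where uniform perfectness of $Z$ is truly indispensable: without it a $d_Z$-ball could carry positive $\theta$-measure yet have vanishingly small $\theta$-diameter, or conversely, and both (H1) and (H4) would break down. Everything else --- subadditivity and bounded overlap of the children's balls, the covering property, and the bookkeeping of the dilation constant $N$ --- is routine once the combinatorics of Definition~\ref{rem:propties-hyp} and the doubling property of $(Z,d_Z)$ are in hand.
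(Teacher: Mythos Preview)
Your proposal conflates two different objects: what you call $\rho(v)=\tfrac12\diam_\theta(B_v)$ is, in the paper's notation, the quantity $\pi(v)$ (the cumulative weight along a descending path), not $\rho(v)$ (the one-step factor). Condition~(H1) requires $\rho(v)\in[\eta_-,\eta_+]\subset(0,1)$ for \emph{every} vertex, and your $\rho$ tends to $0$ as $\ell(v)\to\infty$, so it cannot satisfy (H1) as stated. The paper instead defines $\rho$ as a ratio, $\rho((x,n))=\big(\mu(B(x,\tau\alpha^{-n}))/\mu(B(z,\tau\alpha^{1-n}))\big)^{1/p}$ along a fixed spanning tree, so that the product telescopes to $\pi(v)=\mu(B(\Pi_1(v),\tau\alpha^{-\Pi_2(v)}))^{1/p}$. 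Relatedly, your labelling of (H1)--(H3) does not match the paper's: (H3) is \emph{not} the lower ratio bound $\rho(w)\ge\eta_-\rho(v)$ but the path-integral inequality $\int_\gamma\pi\,ds\ge K_1^{-1}\pi(v_{xy})$ for \emph{every} curve $\gamma$ joining $x$ to $y$; this is the genuinely nontrivial condition and your ``no pinching'' remark does not address it. The paper's argument for (H3) is a clean chaining: since $\pi(v)\approx\diam_\theta(B_v)$ and consecutive balls along any path overlap, the triangle inequality in $\theta$ gives $\sum_j\diam_\theta(B_{u_j})\gtrsim\theta(\Pi_1(u),\Pi_1(w))\approx\pi(v_{uw})$.

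There is also a substantive gap in your argument for $\eta_+<1$. Your diameter-ratio estimate yields $\diam_\theta(B_w)/\diam_\theta(B_v)\lesssim C_0(C_U/\alpha)^\lambda$, where $C_0$ and $\lambda$ come from the power-quasisymmetry of the identity $(Z,d_Z)\to(Z,\theta)$ and therefore depend on the given $\eta$, not just on $C_U$. The hypothesis $\alpha>C_U^3$ says nothing about $\eta$, so you cannot conclude that this bound is below $1$; your aside that ``$C_0$ and $\lambda$ degrade in $C_U$'' misidentifies their dependence. The paper's measure-based definition sidesteps this entirely: one finds, using uniform perfectness and the conditions on $\alpha,\tau$, a ball $B(v_0,\tfrac{\tau}{\alpha^3}\alpha^{1-n})\subset B(z,\tau\alpha^{1-n})\setminus B(x,\tau\alpha^{-n})$, so that $\mu(B(x,\tau\alpha^{-n}))/\mu(B(z,\tau\alpha^{1-n}))\le 1-c_0$ for some $c_0>0$ depending on $\eta$ and the Ahlfors constant. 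The point is that $\eta_+=(1-c_0)^{1/p}$ is automatically strictly less than $1$ regardless of how small $c_0$ is, so no largeness of $\alpha$ relative to $\eta$ is needed. You do mention the measure variant as a possible fix, and indeed it is not merely a smoothing device but the mechanism that makes (H1) go through under the stated hypotheses.
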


\begin{remark}
Note in the above theorem that in Part~{\bf I.}~we \emph{do not} require $(Z,d_Z)$ to be uniformly perfect; then,
Conditions~(H1)---(H3) do not imply uniform perfectness either 
(and indeed, the choice of $\rho$ as the constant function $\rho(x)=1/\alpha$ satisfies Conditions~(H1)---(H3)
with the resulting quasisymmetry a biLipschitz map, see~\cite{BBS}, claim~{\bf I.}(b) of Theorem~\ref{thm:main} above,
or Theorem~\ref{thm:biHolderBdy} below); however, Conditions~(H1)---(H4) \emph{together}
imply that $(Z,d_Z)$ must be uniformly perfect. Thus {\bf I.}(a)---(c) on their own are not explicitly covered in~\cite{Car}, for
Carrasco Piaggio~\cite{Car} does explicitly require $Z$ to be uniformly perfect (see~\cite[Section~2.1]{Car}), 
that assumption is also implicit in the four conditions
together (see Lemma~\ref{lem:unifPerf} below), and conversely, if $(Z,d)$ is quasisymmetric to $(Z,\theta)$ with $\theta$
Ahlfors $d$-regular, then necessarily $(Z,d)$ is uniformly perfect as well. Interestingly also, in~\cite[page~507, (2.8)]{Car},
Carrasco Piaggio requires $\tau\ge 32$ (there $\tau$ is denoted $\lambda$) and then require 
$\alpha\ge 6\kappa^2\, \max\{\tau, C_U\}$ (with $\alpha$ denoted as $a$ and $C_U$ denoted as $K_P$ in~\cite{Car}).
The parameter $\kappa$ is an additional one associated with the construction of hyperbolic filling as given in~\cite{Car}; with
the simplified construction as considered in this note and in~\cite{BBS}, we have $\kappa=1$. Thus, in~\cite{Car} the
parameter $\alpha$ depends on the choice of $\lambda$ and $C_U$, but in our note $\tau$ depends on the choice of
$\alpha$ while in Part~{\bf II.}, both $\alpha$ and $\tau$ depend on $C_U$ as well. 

As pointed out above, when considering only the conditions~(H1)---(H3), the metric space $(Z,d)$ need not be uniformly
perfect, 
but still the quasisymmetry $\Phi$ obtained in Section~\ref{Sec:5} is necessarily a power quasisymmetry. Since there are 
compact doubling metric spaces and quasisymmetries on them that are \emph{not} power quasisymmetries
(see for example the discussion in~\cite{Hei}), it follows that not all quasisymmetries on a doubling space are 
obtained using the method of Carrasco Piaggio~\cite{Car}. 
\end{remark}
 
Section~\ref{Sec:2} is devoted to describing the construction of hyperbolic
filling, and the last five sections of this note are devoted
to the proof of the claims of the theorem.  
We choose to use the construction of hyperbolic filling from~\cite{BBS} for its simplicity in relation to the
one used in~\cite{Car}. While the construction in~\cite{Car} (see also~\cite{Kig2}) 
gives greater flexibility to the choice of sets and vertices,
it is perhaps this very flexibility that makes it difficult to see what the effect of the conditions~(H1)--(H4) are, and so
we chose the simpler version given in~\cite{BBS}. However, the ideas and basic premises are as in~\cite{Car}.

In Section~\ref{Sec:3} the conditions~(H1)---(H3) are discussed and {\bf I.}(a) of Theorem~\ref{thm:main} is proved,
while in Section~\ref{Sec:4} the claim {\bf I.}(b) of the theorem is verified. Section~\ref{Sec:5} is devoted to the 
proof of {\bf I.}(c) of Theorem~\ref{thm:main}, and the discussion in Section~\ref{Sec:6} completes the proof of
the part {\bf I.} of Theorem~\ref{thm:main}. The focus of Section~\ref{Sec:7} is to prove part {\bf II.} of
Theorem~\ref{thm:main}. In Section~\ref{sec:alternate-cond} we list a set of four conditions that parallel the conditions
of Carrasco Piaggio~\cite{Car}, but couched from the perspective of densities on a metric space that lead to
conformal changes in the metric. We end that section by posing a query regarding an Adams-type inequality~\cite{Ad, AH, Mak},
which is known to hold in the case that the function $\rho$ is the constant function $\rho(x)=1/\alpha$.

\vskip .3cm

\noindent {\bf Acknowledgement:} This material is motivated by the series of learning seminars 
during the author's stay at the 
Mathematical Sciences Research Institute (MSRI, Berkeley, CA) while she was resident there as a member of  the program
\emph{Analysis and Geometry in Random Spaces} which is 
 supported by the National Science Foundation (NSF U.S.A.) under Grant No. 1440140, during Spring 2022. 
 The author thanks MSRI for its kind hospitality, and Mario Bonk, Mathav Murugan, 
 and Pekka Pankka for valuable discussions on~\cite{Car} and for comments that helped improve the exposition of the paper.
 The author's work is partially supported by the NSF (U.S.A.) grant DMS~\#2054960.

\section{Construction of hyperbolic filling}\label{Sec:2}

Recall that a metric space $(Z,d_Z)$ is \emph{metric doubling} if there is a positive integer $N$ such that for each
$z\in Z$ and $r>0$, if $A\subset B(z,r)$ such that $d_Z(x,y)\ge r/2$ whenever $x,y\in A$ with $x\ne y$, then there are
at most $N$ number of elements in $A$.

In this note, $(Z,d_Z)$ is a compact metric space, such that it is a metric doubling space.  
Later we will also assume
that $Z$ is uniformly perfect, that is, there is some $C_U>1$ such that for each $z\in Z$ and $0<r<\diam(Z)/2$,
the annulus $B_{d_Z}(z,r)\setminus B_{d_Z}(z,r/C_U)$ is non-empty; however, for now we do not need this assumption.
We will, however, also assume that $0<\diam(Z)<1$ without loss of generality (as we are not interested in singleton
metric spaces).

Constructions of hyperbolic fillings of compact doubling metric spaces can be found for example 
in~\cite{BP, BoSa, BoSc, BuSch, Car, BBS}. The version we give here is that of~\cite{BBS}. The obtained graph in this
construction, when equipped with the path metric, is Gromov hyperbolic; however, this fact is not essential for the discussion
in this note, as we turn the graph into a metric graph by adding unit interval edges to connect neighboring pairs of vertices
and then use path integrals to directly obtain a metric on the graph; hence its boundary can be realized via a metric
completion rather than as the visual boundary of a Gromov hyperbolic space. For this reason, we do not devote
space to discussing Gromov hyperbolicity here. We refer the interested reader to the discussion in~\cite[Section~3]{BBS}.

\begin{defn}\label{rem:propties-hyp}
By a rescaling of the metric if necessary, we may assume without loss of generality that $0<\diam(Z)<1$.
We fix $\alpha\ge 2$ and $\tau>1$, and for each non-negative integer $n$ we set $A_n$ to be a maximal 
$\alpha^{-n}$-separated subset of $Z$, that is, if $z,w\in Z$ with $z\ne w$, then $d_Z(z,w)\ge \alpha^{-n}$, and
$Z=\bigcup_{w\in A_n}B_{d_Z}(w,\alpha^{-n})$. We can, via an inductive construction, ensure that $A_n\subset A_{n+1}$ for
each non-negative integer $n$. We set $V=\bigcup_{n=0}^\infty A_n\times\{n\}$. The set $V$ is the vertex set of the metric
graph $X$ to be constructed next. We do this construction as follows. The vertex $w_0=(x_0,0)$, with $x_0\in A_0$,
will play the role of a root of the graph.
\begin{enumerate}
\item Two vertices $v_1=(z_1,n_1), v_2=(z_2,n_2)\in V$ are neighbors, denoted $v_1\sim v_2$, if 
$v_1\ne v_2$ and either $n_1=n_2$ with $B_{d_Z}(z_1,\tau\alpha^{-n_1})\cap B_{d_Z}(z_2,\tau\alpha^{-n_2})\ne \emptyset$, or else
$n_1=n_2\pm 1$ and $B_{d_Z}(z_1,\alpha^{-n_1})\cap B_{d_Z}(z_2,\alpha^{-n_2})\ne \emptyset$. 
\item We turn $V$ into a metric 
graph $X$ by gluing a unit-length interval to each pair of neighboring vertices. 
\item We call a vertex $v_2=(z_2,n_2)$ a
child of a vertex $v_1=(z_1,n_1)$ if $v_1\sim v_2$ and $n_2=n_1+1$; we also then say that the edge
$[v_1,v_2]$ is a vertical edge. If $[v_1,v_2]$ is a vertical edge, then necessarily $d_Z(z_1,z_2)<\alpha^{-n_1}+\alpha^{-n_2}$,
and so with $n=\min\{n_1,n_2\}$, we have that $d_Z(z_1,z_2)<\alpha^{1-n}$ (we use our choice of  $\alpha\ge 2$ here).
\item If $v_1\sim v_2$ with $n_1=n_2$, then we say
that the edge $[v_1,v_2]$ is a horizontal edge. In this case we have that $d_Z(z_1,z_2)<\tau \alpha^{1-n_1}$.
\item We say that a point $x\in X$ is a descendant of a point $y\in X$ if 
there is a vertically descending path \emph{from} $y$ to $x$. 
\item A vertex $v$ is said to be a \emph{common ancestor} of
two points $x,y\in X$ if there are two vertically descending paths, one from $v$ to $x$ and the other from $v$ to $y$.
\item Also, given a vertex $v=(z,n)\in V$, we set 
\[
\Pi_1(v)=z\ \text{ and }\ \Pi_2(v)=n.
\]
\item If $\tau\ge 1+1/\alpha$ and $(z,n), (x_1,n-1), (x_2,n-1)\in V$ such that $(z,n)\sim (x_i,n-1)$ for $i=1,2$, then 
$(x_1,n-1)\sim(x_2,n-1)$.
\item Thanks to the doubling property, there is a constant $C\ge 1$, depending only on the doubling constant related to
the metric doubling property of $(Z,d_Z)$ and the choice of $\alpha, \tau$, such that for each positive integer $n$ we have
$\sum_{x\in A_n}\chi_{B_{d_Z}(x,\tau\alpha^{-n})}\le C$ pointwise everywhere on $Z$.
\item Suppose that $\cdots\sim (x_{n+1},n+1)\sim (x_{n},n)\sim (y_n,n)\sim (y_{n+1},n+1)\sim\cdots$ is a path in the graph,
allowing for the possibility that $x_n=y_n$ by a slight abuse of notation above, we see that for each $k\ge n$,
$d_Z(x_k,x_{k+1})\le \alpha^{-k}+\alpha^{-k-1}\le \alpha^{1-k}$ (we use the choice $\alpha\ge 2$ here). With similar estimates
holding for $d(y_k,y_{k+1})$, we see that the two sequences $(x_k)_{k\ge n}$ and $(y_k)_{k\ge n}$ are Cauchy sequences in $Z$,
converging to points denoted $x$ and $y$ respectively. We see that then for each $j\ge n$, 
\[
d_Z(x,x_j)\le \sum_{n=j}^\infty \alpha^{1-n}=\frac{\alpha^{2-j}}{\alpha-1},
\]
with a similar estimate holding for $d_Z(y,y_j)$. Suppose that $x\ne y$. With $n_{xy}$ a non-negative integer such that
$\alpha^{-n_{xy}}<d_Z(x,y)\le \alpha^{1-n_{xy}}$, and $j_0$ a non-negative integer such that $\alpha^{-j_0}<\tau-1\le \alpha^{1-j_0}$,
we have that
\[
\alpha^{-n_{xy}}<d_Z(x,y)\le d_Z(x,x_n)+d_Z(x_n,y_n)+d_Z(y_n,y)\le \frac{2\alpha^{2-j}}{\alpha-1}+2\tau\alpha^{-n}\le \alpha^{3+j_0-n}.
\]
It follows that 
\begin{equation}\label{eq:go-in}
n\le 3+j_0+n_{xy}.
\end{equation}
\item Given a vertex $v=(x,n)\in V$, there is a vertically descending geodesic ray 
$w_0=v_0\sim v_1\sim\cdots\sim v_k\sim\cdots$ with $v_k=v$ for each $k\ge n$. This is done by choosing $v_k=(x_k,k)$ for
$k=1,\cdots, n-1$ such that $x_k\in A_k$ with $d_Z(x,x_k)\le \alpha^{-k}$.
\end{enumerate}
\end{defn}

Note that $A_0$ has only one point by our hypothesis that $\diam(Z)<1$. The vertex $w_0=(x_0,0)$ plays 
a distinguished role in the graph corresponding to $x_0\in A_0$. If $z\in A_{n+1}\setminus A_n$, then by the maximality
of $A_n$ there is a point $w_z\in A_n$ such that $d_Z(z,w_z)<\alpha^{-n}$, and so $(z,n+1)\sim(w_z,n)$; therefore it is
easy to see that $X$ is path-connected. While this construction is not exactly the one considered in~\cite{Car}, it is
in the spirit of~\cite{Car} and is the one used in~\cite{BBS}. From~\cite[Theorem~3.4]{BBS} we know that $X$ is
Gromov hyperbolic, with hyperbolicity constant depending solely on $\alpha$ and $\tau$.

Larger the choice of $\tau$ is, the greater the number of horizontal edges. Since $Z$ is doubling, each vertex $v\in V$
has a uniformly bounded degree, with the upper bound on the degree depending solely on the doubling constant associated
with $\nu$ and the parameters $\alpha$ and $\tau$. 
\emph{Henceforth, we will fix $\alpha\ge 2$ and $\tau\ge 1+\tfrac{1}{\alpha}$.} The condition on $\tau$ ensures that
the conclusion of~(8) above holds.

\section{Weighted uniformization metric and three conditions}\label{Sec:3}

Since $\diam(Z)>0$, the graph $X$, equipped with the path metric $d_X$, is necessarily unbounded.
In this section we consider a family of uniformizations, each dampening the metric $d_X$ at locations far from the
root vertex $w_0$, so that the dampened metric on $X$ turns $X$ into a bounded non-complete metric space. The
principal object of study in this note is the boundary of the damped space, as it is in~\cite{Car}.

\begin{defn}\label{def:H1-3}
We consider a function $\rho:V\to\R$ that satisfies the following
conditions (using the labels from~\cite{Car}):
\begin{enumerate}
\item[{(H1)}] There exist $0<\eta_-\le \eta_+<1$ such that $\rho:X\to[\eta_-,\eta_+]$.
\item[{(H2)}] There is a constant $K_0>0$ so that
if $v_1,v_2\in V$ with $v_1\sim v_2$, and
if $w_0\sim w_1\sim\cdots\sim w_k=v_1$ and 
$w_0=u_0\sim u_1\sim\cdots\sim u_n=v_2$ are vertical edges, then
\[
\pi(v_1):=\prod_{j=0}^k\rho(w_j)\le K_0\prod_{j=0}^n\rho(u_j)=:K_0\, \pi(v_2).
\]
This also defines $\pi:V\to(0,\infty)$. We extend $\pi$ to all of $X$ by setting $\pi(x)=t\pi(v_1)+(1-t)\pi(v_2)$
when $x$ is a non-vertex point in the edge $[v_1,v_2]$, and $t$ denotes the distance from $x$ to the vertex $v_1$.
\item[{(H3)}] 
There is a constant $K_1>0$ satisfying the following condition. Whenever $x,y\in X$ with $x, y$ belonging to different
edges of $X$, there are two vertically descending paths $w_0=v_0\sim v_1\sim\cdots\sim v_k$,
$w_0=u_0\sim u_1\sim\cdots\sim u_n$ with $x\in [v_{k-1},v_k]$, $y\in [u_{k-1},u_k]$. Let $v_{xy}$ denote the vertex
in the path $w_0=v_0\sim v_1\sim\cdots\sim v_k$ with largest possible value of $\Pi_2(v_{xy})$ such that 
either $v_{xy}=u_{\Pi_2(v_{xy})}$ or else $v_{xy}\sim u_{\Pi_2(v_{xy})}$. For every path 
$\gamma$ in $X$ with end points $x$ and $y$, we must have
\[
\int_\gamma \pi(\gamma(t))\, dt\ge K_1^{-1} \pi(v_{xy}).
\]
\end{enumerate}
\end{defn}

\begin{remark}\label{rem:pi-is-welpsd}
Note that in Condition~(H2), if we have $v_2=v_1$ instead of $v_2\sim v_1=(x_v,n)$, then $k=n$ and necessarily
\begin{align*}
d_Z(\Pi_1(w_{n-1}),\Pi_2(u_{n-1}))&\le d_Z(\Pi_1(w_{n-1}),x_v)+d_Z(x_v,\Pi_2(u_{n-1}))\\
& \le 2\left[\alpha^{-(n-1)}+\alpha^{-n}\right]\le 4\alpha^{1-n}.
\end{align*}
It follows that if $\alpha\ge 2$ and $\tau\ge 2\alpha^2+1>4$, then $w_{n-1}\sim u_{n-1}$. Hence from~(H2) we have 
that $\pi(v)$, up to the ambiguity of the multiplicative constant $K_0$, is well-defined in that the choice of the descending
path used to define $\pi(v)$ is not crucial.
\end{remark}

\begin{remark}\label{rem:com-elders}
If $x\in X$, we can find paths $=w_0=v_0\sim v_1\sim\cdots$ in $X$ so that for each positive integer $n$ we have
that $\Pi_1(v_n)\in A_n$ with $d_Z(x,\Pi_1(v_n))<\alpha^{-n}$. Let $w_0\sim w_1\sim\cdots$ be another such path
associated with a point $y\in X$, and let $v_{xy}$ be the vertex point in the path $\{v_n\, :\, n=0,1,\cdots\}$ that is
a neighbor of $w_{\Pi_2(v_{xy})}$ 
such that $\Pi_2(v_{xy})$ be the largest possible (i.e., the latest common ancestor). Then from~(H3) above, 
when $\gamma$ is the concatenation of the curves from $v_{xy}$ to $x$ and to $y$ respectively via the sequences
$(v_n)_{n\ge \Pi_2(v_{xy})}$, $v_{xy}\sim w_{\Pi_2(v_{xy})}$, and $(w_n)_{n\ge \Pi_2(v_{xy})}$, we have that
\[
\int_\gamma \pi(\gamma(t))\, dt\ge K_1^{-1}\, \pi(v_{xy}),
\]
see point~(10) of Definition~\ref{rem:propties-hyp} above.
\end{remark}

We will use the weight $\pi$ 
as the conformal density, to 
modify the metric on the graph $X$ from the path metric to the metric $d_\rho$.

In~\cite{Car} a fourth condition is also required, but we will not consider that condition until the penultimate section of this note. We
postpone its definition to that section, see Definition~\ref{def:H4} below.

\begin{defn}\label{def:rho-metr}
Let $d_\rho:X\times X\to[0,\infty)$ be given as follows. For $x,y\in X$, we set
\[
d_\rho(x,y)=\inf_\gamma\int_\gamma\pi(\gamma(t))\, dt,
\]
where the infimum is over all paths $\gamma$ in $X$ with end points $x$ and $y$. We only consider paths that are arc-length 
parametrized with respect to the graph metric $d_X$.
\end{defn}

\begin{lem}\label{lem:metric}
Suppose that $\rho$ satisfies Condition~(H1). Then
$d_\rho$ is a metric on $X$. Moreover, $(X,d_\rho)$ is locally compact, non-complete metric space.
\end{lem}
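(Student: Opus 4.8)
The plan is to verify the three metric axioms and then establish local compactness and non-completeness separately. Symmetry is immediate from the definition of $d_\rho$, since reversing a path $\gamma$ leaves $\int_\gamma\pi(\gamma(t))\,dt$ unchanged. The triangle inequality follows by concatenation: given paths $\gamma_1$ from $x$ to $z$ and $\gamma_2$ from $z$ to $y$, their concatenation is a path from $x$ to $y$ whose $\pi$-length is the sum of the two $\pi$-lengths, so $d_\rho(x,y)\le d_\rho(x,z)+d_\rho(z,y)$. The only nontrivial axiom is positivity: we must show $d_\rho(x,y)>0$ whenever $x\ne y$. Here Condition~(H1) enters decisively. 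If $x$ and $y$ lie on a common edge $[v_1,v_2]$ (or on edges sharing a vertex), then any path from $x$ to $y$ must traverse at least the graph-distance $d_X(x,y)$ along that edge, and on that edge $\pi$ is bounded below by a positive constant (a convex combination of $\pi(v_1),\pi(v_2)>0$, both of which are at least $\eta_-^{\,m}$ for $m$ the larger generation index); hence $d_\rho(x,y)\ge (\text{const})\,d_X(x,y)>0$. If $x$ and $y$ lie on different, non-adjacent edges, then any path $\gamma$ joining them must either pass through one of the (at most four) endpoints of those two edges, or stay within the edges — in the latter impossible case there is nothing to prove — so it suffices to bound $d_\rho$ from below between vertices, and for a path between distinct vertices $v,w$ one can use that $\gamma$ traverses a full unit edge adjacent to $v$, on which $\pi\ge\eta_-\,\pi(v)>0$, giving a positive lower bound depending on $v$. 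Thus $d_\rho$ separates points and is a metric.

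For local compactness, fix $x\in X$. Because $Z$ is doubling, every vertex has degree bounded by a constant $D=D(\alpha,\tau,N)$, so the ball $B_{d_X}(x,R)$ in the graph metric meets only finitely many edges for each $R$, and its closure (a finite union of compact edges) is compact in $d_X$. On this finite subgraph, $\pi$ is bounded above and below by positive constants, so on $B_{d_X}(x,R)$ the metrics $d_\rho$ and $d_X$ are biLipschitz equivalent; consequently a small $d_\rho$-ball about $x$ is contained in some $B_{d_X}(x,R)$ with compact closure, and closed $d_\rho$-balls of small radius about $x$ are compact. This gives local compactness.

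For non-completeness, consider a vertical geodesic ray $w_0=v_0\sim v_1\sim\cdots$ in $X$, as furnished by point~(12) of Definition~\ref{rem:propties-hyp}. Along this ray $\pi(v_n)=\prod_{j=0}^n\rho(v_j)\le\eta_+^{\,n+1}\to 0$, and on the $n$-th edge $\pi\le\eta_+^{\,n}$, so the $d_\rho$-length of the tail $\bigcup_{k\ge n}[v_k,v_{k+1}]$ is at most $\sum_{k\ge n}\eta_+^{\,k}=\eta_+^{\,n}/(1-\eta_+)\to 0$. Hence $(v_n)$ is a $d_\rho$-Cauchy sequence; if it had a limit $p\in X$, then $p$ would lie on some edge $[v_1',v_2']$ of $X$, and since $d_\rho$ restricted to a neighborhood of $p$ is biLipschitz to $d_X$ (as in the local compactness argument), $(v_n)$ would have to be eventually $d_X$-close to $p$, contradicting that $v_n$ escapes every $d_X$-bounded set. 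Therefore $(v_n)$ has no limit in $X$, and $(X,d_\rho)$ is not complete.

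The main obstacle is the positivity argument in the case of two points on distant edges: one must rule out, uniformly, the possibility that a path spends arbitrarily little $\pi$-length getting between them. The clean way around this is the observation just made — any such path must pass through a vertex, and traversing even one unit edge at a vertex $v$ costs at least $\eta_-\,\pi(v)$ in $\pi$-length, which is a fixed positive number once $v$ (equivalently, its generation) is fixed; chasing which vertices a path from $x$ to $y$ can possibly route through (only finitely many are "cheap", namely those of bounded generation) then yields the desired positive lower bound. I expect this bookkeeping to be the only point requiring care; everything else is routine once Condition~(H1) is in hand.
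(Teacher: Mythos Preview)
Your approach is essentially the same as the paper's: case analysis for positivity (same edge vs.\ different edges, using that any path must exit through a vertex of bounded generation), local compactness via biLipschitz equivalence of $d_\rho$ and $d_X$ on $d_X$-bounded sets, and non-completeness via a vertically descending ray whose $d_\rho$-length is summable by the geometric series $\sum\eta_+^n$.

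One small correction: your claim that traversing a unit edge at a vertex $v$ costs at least $\eta_-\,\pi(v)$ in $\pi$-length is not justified by (H1) alone. On an edge $[v,u]$, the integrand $\pi$ ranges between $\pi(v)$ and $\pi(u)$, and without (H2) there is no relation of the form $\pi(u)\ge\eta_-\,\pi(v)$; indeed $\pi(u)/\pi(v)$ can be as small as $\eta_-^{n+2}/\eta_+^{n+1}$ where $n=\Pi_2(v)$. What (H1) alone gives is $\pi(u)\ge\eta_-^{\Pi_2(u)+1}\ge\eta_-^{\Pi_2(v)+2}$, so the cost of that edge is at least $\eta_-^{\Pi_2(v)+2}$, a positive number depending only on the generation of $v$. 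This is exactly what the paper uses, and it is all you need: since the path from $x$ must first exit the edge $[v_1,v_2]$ through one of its endpoints and then traverse part of an adjacent edge, and all vertices involved have generation at most $n_x+1$ (with $n_x=\max\{\Pi_2(v_1),\Pi_2(v_2)\}$), you get $d_\rho(x,y)\ge\eta_-^{n_x+2}\cdot(\text{positive $d_X$-length})>0$. Also, your reference to ``point~(12)'' should be point~(11). With these fixes, your argument matches the paper's.
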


\begin{proof}
Let $x,y\in X$ with $x\ne y$ and $\gamma$ be a curve in $X$ with end points $x$ and $y$.
If $x$ and $y$ belong to the same edge $[v_1,v_2]$ in $X$, then any curve $\gamma$ connecting $x$ to $y$ has to 
contain a subcurve of $d_X$-length at least $d_X(x,y)$ that lies in the subgraph obtained by adding the edges that
have either $v_1$ or $v_2$ as a vertex-endpoint. Hence, with $n=\max\{\Pi_2(v_1),\Pi_2(v_2)\}$, we have that
\[
\int_\gamma\pi(\gamma(t))\, dt\ge \eta_-^{n+1}\, d_X(x,y)>0,
\]
and taking the infimum over all curves $\gamma$ gives $d_\rho(x,y)\ge \eta_-^{n+1}d_X(x,y)>0$.

Next, suppose that $x$ and $y$ belong to different edges.
Then any curve $\gamma$ connecting $x$ to $y$ has to have a sub-curve of positive $d_X$-length that passes through
a vertex $v\ne x$ such that $v$ is a neighbor of one of the two vertices that make up the edge $x$ lies in. It follows that
$\Pi_2(v)\le n_x+1$, with $n_x$ a positive integer that depends solely on $x$. Hence
\[
\int_\gamma\pi(\gamma(t))\, dt\ge \pi(v)d_X(x,v)\ge \eta_-^{n_x+1}\, d_X(x,v)>0.
\]
Taking the infimum over all $\gamma$ gives $d_\rho(x,y)\ge \eta_-^{n_x+1}\, d_X(x,v)>0$. 
Thus, in both cases we have that if $x\ne y$ then $d_\rho(x,y)>0$. The triangle inequality and symmetry follow immediately
from the definition of $d_\rho$, and so $d_\rho$ is a metric on $X$.

From the first paragraph of this proof, we know that for each vertex $v$, the subgraph made up of all the edges that have
$v$ as an end-point is a compact subset of $(X,d_\rho)$, and moreover, $v$ is in the $d_\rho$-interior of this subgraph.
Hence $(X,d_\rho)$ is locally compact. 

Finally, for each non-negative integer $n$ we set $w_n=(x_0,n)$. Then $w_0\sim w_1\sim\cdots \sim w_n\sim w_{n+1}\sim\cdots$,
and as the edge $[w_n,w_{n+1}]$ is a path connecting the two vertices $w_n$ and $w_{n+1}$, we see that
\begin{equation}\label{eq:easyUbound}
d_\rho(w_n,w_{n+1})\le \eta_+^n.
\end{equation}
As $0<\eta_+<1$, it follows that $(w_n)_n$ is a Cauchy sequence in $(X,d_\rho)$. This sequence does not converge to any element in 
$X$. Therefore $(X,d_\rho)$ is non-complete.
\end{proof}

If $\rho$ is the constant function $\rho(x)=1/\alpha$, where $\alpha$ (together with $\tau$) is the parameter used in constructing
the hyperbolic filling $X$ of $Z$, then $\pi(v)\approx \alpha^{-n}$ where $n=\Pi_2(v)$. Therefore, from~\cite[Proposition~4.4]{BBS}
we know that $\partial_\rho X=:\overline{X}\setminus X$ is biLipschitz equivalent to $Z$. Here, the completion $\overline{X}$ 
is taken with respect to the metric $d_\rho$. \emph{We will not need this information for our discussion in this note, and so we do 
not elaborate on this further} but refer the interested reader to~\cite{BBS}.

In Lemma~\ref{lem:metric} only~(H1) played a role. In the next section Conditions~(H1) and~(H3) together will play a key role, but
Condition~(H2) will not.

\section{Bi-H\"older property}\label{Sec:4}

Recall from the pervious section that $(X,d_\rho)$ is locally compact but not complete. We set 
$\partial_\rho X:=\overline{X}\setminus X$, where $\overline{X}$ is the completion of $X$ with respect
to $d_\rho$. As $X$ is locally compact with respect to $d_\rho$ (see Lemma~\ref{lem:metric}), 
it follows that $X$ is an open subset of $\overline{X}$.

\vskip .3cm

As shown in~\cite[Proposition~4.1]{BBS}, if $\eta_- <1/\alpha$, then there is no guarantee that 
$\partial_\rho X$ is even homeomorphic to $Z$; hence if $\eta_-<1/\alpha$, then Condition~(H3) becomes vital in 
obtaining that $\partial_\rho X$ is homeomorphic to $Z$.

We now construct a natural map $\Phi:Z\to\partial_\rho X$ as follows.

\begin{defn}\label{def:phi}
For $z\in Z$ and for each positive integer $n$ we can find $v_n\in V$ such that
with $x_n=\Pi_1(v_n)\in A_n$ and $\Pi_2(v_n)=n$, with $d_Z(x_n,z)<\alpha^{-n}$. Note that then 
$z\in B_{d_Z}(x_n,\alpha^{-n})\cap B_{d_Z}(x_{n+1},\alpha^{-(n+1)})$, and so $v_n\sim v_{n+1}$, and hence
$w_0=v_0\sim v_1\sim\cdots\sim v_n\sim v_{n+1}\sim\cdots$ is a vertically descending path in $X$, with
$\pi(v_n)\le \eta_+^n$. Hence the sequence $(v_n)_n$ is a Cauchy sequence in $(X,d_\rho)$, for
we have that $d_\rho(v_n,v_{n+1})\le 2\eta_+^n$, see~\eqref{eq:easyUbound}.
We set $\Phi(x)$ to be the class of all Cauchy sequences in $(X,d_\rho)$ that are equivalent to this Cauchy sequence. 

To see that $\Phi$ is well-defined, suppose that $y_n\in A_n$ for each positive integer $n$ such that 
$d_Z(x,y_n)<\alpha^{-n}$. Then $(y_n,n)\sim v_n$, because $z\in B_{d_Z}(x_n,\alpha^{-n})\cap B_{d_Z}(y_n,\alpha^{-n})$.
As above, the sequence $((y_n,n))_n$ is also Cauchy with respect to the metric $d_\rho$, but also 
$d_\rho(v_n,(y_n,n))\le \eta_+^n+\eta_+^{n+1}$, and so the two Cauchy sequences are equivalent with respect to
the metric $d_\rho$. Thus, $\Phi:Z\to\partial_\rho X$ is well-defined. 
\end{defn}

\begin{thm}\label{thm:biHolderBdy}
Suppose that $\rho$ satisfies Conditions~(H1) and~(H3). 
Then $\Phi$ is a homeomorphism with 
\begin{equation}\label{eq:biHolder}
C^{-1}\, d_Z(x,y)^{\tau_-}\le d_\rho(\Phi(x),\Phi(y))\le C\, d_Z(x,y)^{\tau_+}
\end{equation}
for each $x,y\in Z$, where
\[
\tau_-:=\frac{\log(\eta_-)}{\log(1/\alpha)}, \qquad \tau_+:=\frac{\log(\eta_+)}{\log(1/\alpha)}.
\]
Moreover, $d_\rho(\Phi(x),\Phi(y))\approx \pi(v_{xy})$.
\end{thm}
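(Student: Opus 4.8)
The plan is to prove the two-sided estimate \eqref{eq:biHolder} by first establishing the comparison $d_\rho(\Phi(x),\Phi(y))\approx \pi(v_{xy})$, and then translating this into the bi-H\"older bounds via an estimate of $\pi(v_{xy})$ in terms of $d_Z(x,y)$; the homeomorphism claim then follows. First I would set up notation: given $x,y\in Z$, choose vertically descending paths $w_0=v_0\sim v_1\sim\cdots$ and $w_0=u_0\sim u_1\sim\cdots$ with $\Pi_1(v_n),\Pi_1(u_n)\in A_n$ approximating $x$ and $y$ respectively as in Definition~\ref{def:phi}, and let $v_{xy}$ be the latest common ancestor as in Remark~\ref{rem:com-elders}. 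For the \emph{upper} bound $d_\rho(\Phi(x),\Phi(y))\lesssim \pi(v_{xy})$, I would use the concrete test curve $\gamma$ from Remark~\ref{rem:com-elders}: the concatenation of the descending ray from $v_{xy}$ along $(v_n)$ down to $x$, and the descending ray from $v_{xy}$ (via the neighbor $u_{\Pi_2(v_{xy})}$) along $(u_n)$ down to $y$. Along such a ray, by~(H1) one has $\pi(v_{n+1})\le \eta_+\,\pi(v_n)$, so the path integral over each unit edge is at most $\pi(v_n)\le \eta_+^{\,n-m}\pi(v_{xy})$ where $m=\Pi_2(v_{xy})$, and summing the geometric series gives $\int_\gamma \pi\,dt \le \frac{C}{1-\eta_+}\,\pi(v_{xy})$. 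Passing to the infimum over curves joining the two Cauchy sequences (and using that $\pi$ extends linearly along edges, so the limiting endpoints contribute nothing) yields $d_\rho(\Phi(x),\Phi(y))\le C\,\pi(v_{xy})$.

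For the \emph{lower} bound $d_\rho(\Phi(x),\Phi(y))\gtrsim \pi(v_{xy})$, this is exactly where Condition~(H3) enters, and it is the main point of the theorem. Any competitor curve $\gamma$ joining $\Phi(x)$ to $\Phi(y)$ in $\overline X$ can be approximated by curves joining points $v_n$ (near $x$) to $u_n$ (near $y$) with $n$ large; for such a curve (H3), as reformulated in Remark~\ref{rem:com-elders}, gives $\int_\gamma \pi\,dt \ge K_1^{-1}\pi(\widetilde v_{xy})$ where $\widetilde v_{xy}$ is the relevant latest-common-ancestor vertex for that pair of approximating descending paths. The slightly delicate bookkeeping is that $\widetilde v_{xy}$ need not literally equal $v_{xy}$, but (H2) (via the well-definedness discussion in Remark~\ref{rem:pi-is-welpsd}, using $\tau\ge 2\alpha^2+1$) guarantees $\pi(\widetilde v_{xy})\approx \pi(v_{xy})$ up to the multiplicative constant $K_0$; here one also needs that the depth $\Pi_2(v_{xy})$ is stable under the choice of approximating paths, which follows from the separation properties of the $A_n$ and the neighbor condition in Definition~\ref{rem:propties-hyp}(1). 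Taking the infimum over $\gamma$ gives $d_\rho(\Phi(x),\Phi(y))\ge c\,\pi(v_{xy})$, completing the comparison $d_\rho(\Phi(x),\Phi(y))\approx \pi(v_{xy})$.

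Next I would estimate $\pi(v_{xy})$ in terms of $d_Z(x,y)$. Let $m=\Pi_2(v_{xy})$. On one hand, since $v_m$ and $u_m$ are neighbors (or equal) at level $m$, the ball estimates for horizontal/vertical edges in Definition~\ref{rem:propties-hyp}(3)--(4) together with the bound $d_Z(x,\Pi_1(v_m))\le \sum_{k\ge m}\alpha^{1-k} = \frac{\alpha^{2-m}}{\alpha-1}$ from Definition~\ref{rem:propties-hyp}(10) force $d_Z(x,y)\lesssim \alpha^{-m}$. On the other hand, maximality of $m$ (i.e.\ that $v_{m+1}$ and $u_{m+1}$ are not neighbors) together with the same ball estimates forces $d_Z(x,y)\gtrsim \alpha^{-m}$. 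Hence $\alpha^{-m}\approx d_Z(x,y)$, i.e.\ $m = \frac{\log(1/d_Z(x,y))}{\log\alpha}+O(1)$. Combining with the telescoping bound $\eta_-^{\,m}\le \pi(v_{xy})\le \eta_+^{\,m}$ (up to constants from $K_0$ and from the root vertices), and using $\eta_\pm^{\,m} = \alpha^{-m\log(1/\eta_\pm)/\log\alpha} = \alpha^{-m\tau_\pm}\approx d_Z(x,y)^{\tau_\pm}$, we obtain precisely $C^{-1}d_Z(x,y)^{\tau_-}\le d_\rho(\Phi(x),\Phi(y))\le C\,d_Z(x,y)^{\tau_+}$.

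Finally, for the homeomorphism statement: injectivity of $\Phi$ is immediate from the lower bound in \eqref{eq:biHolder}; continuity of $\Phi$ follows from the upper bound; and continuity of $\Phi^{-1}$ follows from the lower bound, once surjectivity is known. Surjectivity is the last point to check: given $\xi\in\partial_\rho X$ represented by a $d_\rho$-Cauchy sequence in $X$, I would argue that the sequence must eventually escape every compact subgraph (since $X$ is locally compact and $\xi\notin X$), hence its second coordinates tend to infinity; passing to a subsequence lying on a descending path and using compactness of $Z$, the first coordinates converge to some $z\in Z$ with $\Phi(z)=\xi$. I expect the main obstacle to be the lower-bound step: marrying the curve-integral lower bound supplied abstractly by (H3) for pairs of \emph{vertices} to the completed-metric statement for pairs of \emph{boundary points}, and checking that the latest-common-ancestor vertex (and in particular its level $m$) that appears is, up to bounded ambiguity controlled by (H2), independent of which approximating descending paths one picks.
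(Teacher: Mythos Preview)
Your overall architecture matches the paper's: first prove $d_\rho(\Phi(x),\Phi(y))\approx\pi(v_{xy})$ by combining a test-curve upper bound with the (H3) lower bound, then derive the bi-H\"older inequality from $\eta_-^{m}\le\pi(v_{xy})\le\eta_+^{m}$ together with the two-sided comparison $\alpha^{-m}\approx d_Z(x,y)$ (the paper records this as~\eqref{eq:pi-n}), and finally deduce the homeomorphism claims.

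The genuine problem is in your lower-bound step: you invoke Condition~(H2) (and its constant $K_0$), but the hypotheses of Theorem~\ref{thm:biHolderBdy} are only (H1) and (H3). Your worry that the approximating common ancestor $\widetilde v_{xy}$ might differ from $v_{xy}$ is self-inflicted. If you fix the descending paths $(v_n)$ and $(u_n)$ once and for all (as in Definition~\ref{def:phi}) and apply (H3) to the truncated endpoints $v_N$, $u_N$ using those \emph{same} truncated paths, then for every $N>\Pi_2(v_{xy})$ the latest-common-ancestor vertex produced by (H3) is literally $v_{xy}$, and $\pi(v_{xy})$ is computed along the fixed $(v_n)$-path. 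Thus $\int_\gamma\pi\,dt\ge K_1^{-1}\pi(v_{xy})$ directly, with no appeal to (H2); this is exactly how the paper proceeds. You should likewise drop the reference to $K_0$ in the estimate $\eta_-^m\le\pi(v_{xy})\le\eta_+^m$: since $\pi(v_{xy})=\prod_{j=0}^{m}\rho(v_j)$ along the fixed path, (H1) alone gives the bounds. A smaller point on surjectivity: an arbitrary $d_\rho$-Cauchy sequence need not lie on a single descending path, so ``passing to a subsequence lying on a descending path'' is not available; the paper instead extracts a subsequence with $\Pi_2(w_k)\to\infty$, uses compactness of $(Z,d_Z)$ on the first coordinates to obtain a limit $x_\infty$, and then compares that subsequence with a genuine descending path to $x_\infty$ via the already-established comparison $d_\rho\approx\pi(\cdot)$ and~\eqref{eq:pi-n}.
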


We remind the reader that the root of $X$ is denoted $w_0=(x_0,0)$.

\begin{proof}
Let $j_0$ be the unique integer such that $\alpha^{-j_0}<\tau-1\le \alpha^{1-j_0}$.

We first aim to prove~\eqref{eq:biHolder}.
Let $x,y\in Z$, and choose a positive integer $n_{xy}$ such that
$\alpha^{-n_{xy}}<d_Z(x,y)\le \alpha^{1-n_{xy}}$. We fix a path $w_0=v_0\sim v_1\sim\cdots$ such that for each non-negative integer
$n$ we have that $\Pi_1(v_n)\in A_n$ with $d_Z(\Pi_1(v_n),x)\le \alpha^{-n}$. Let $v_0=w_0\sim w_1\sim\cdots$ be a corresponding
choice of descending sequence with respect to $y$. We claim that for each non-negative integer $n$ with $n\le n_{xy}-j_0-1$,
either $v_n=w_n$ or $v_n\sim w_n$. To this end, we assume that $v_n\ne w_n$ and $1\le n\le n_{xy}-j_0-1$,  
for otherwise there is nothing to prove. 
Since $d_Z(x,\Pi_1(v_n))\le \alpha^{-n}$ and $d_Z(y,\Pi_1(w_n))\le \alpha^{-n}$, and
as $n\le n_{xy}-j_0-1$, it follows that 
\[
d_Z(x,\Pi_1(w_n))\le \alpha^{-n}+\alpha^{1-n_{xy}}\le \alpha^{-n}(1+\alpha^{-j_0})<\tau\alpha^{-n}.
\]
It follows that $x\in B_{d_Z}(\Pi_1(v_n),\alpha^{-n})\cap B_{d_Z}(\Pi_1(w_n),\tau\alpha^{-n})$, and so $v_n\sim w_n$. 
Next we claim that
if $n$ is a positive integer with $v_n\sim w_n$, then $n\le n_{xy}+(-j_0)_++3$. Indeed, we have that 
\[
\alpha^{-n_{xy}}<d(x,y)\le d(x,\Pi_1(v_n))+d(y,\Pi_1(w_n))+2\tau\alpha^{-n}\le 2(1+\tau)\alpha^{-n}\le \alpha^{1-n}(1+\tau), 
\]
with $1+\tau\le 2\le 1+\alpha^{1-j_0}\le \alpha^{3-n}$ if $j_0\ge 0$, and $1+\tau\le \alpha^{3-n-j_0}$ if $j_0<0$.
From this we obtain $n+(-j_0)_+-3<n_{xy}$. As $n$ and $n_{xy}$ are integers, it follows that $n\le n_{xy}+(-j_0)_++3$.

We now fix a choice of sequences $v_n, w_n$, $n=0,1,\cdots$ as above corresponding to the points $x,y\in Z$, and 
let $F[x,y]$ denote the collection of all vertices $v_n$ for which $v_n\sim w_n$ or $v_n=w_n$. 
Let $v_{xy}$ be the vertex in $F$ for which
$\Pi_2(v_{xy})=\max\{\Pi_2(v)\, :\, v\in F[x,y]\}$. For symmetry's sake, we also set $w_{xy}$ to be from the sequence
corresponding to $y$ such that $w_{xy}=w_{\Pi_2(v_{xy})}$. 

Recall that $j_0$ is the integer such that $\alpha^{-j_0}<\tau-1\le \alpha^{1-j_0}$.
From the above argument, we see that
\begin{equation}\label{eq:pi-n}
n_{xy}-|j_0|-1\le \Pi_2(v_{xy})=\Pi_2(w_{xy})\le n_{xy}+|j_0|+1,
\end{equation}
and that either $w_{xy}=v_{xy}$ or $w_{xy}\sim v_{xy}$. The curve 
$\beta$ given by the path
$\cdots\sim v_n\sim v_{n-1}\sim\cdots\sim v_{xy}\sim w_{xy}\sim \cdots\sim w_{n-1}\sim w_n\sim\cdots$ has $\Phi(x)$ 
and $\Phi(y)$ as its end points, and so
\[
d_\rho(\Phi(x),\Phi(y))\le \int_\beta \pi(\beta(t))\, dt
 =\pi(v_{xy})\, \sum_{j=\Pi_2(v_{xy})}^\infty \left[\frac{\pi(v_i)}{\pi(v_{xy})}+\frac{\pi(w_i)}{\pi(w_{xy})}\right].
\]
Note that for $j\ge \Pi_2(v_{xy})$, 
\[
 \eta_-^{j-\Pi_2(v_{xy})}\le \frac{\pi(v_{i})}{\pi(v_{xy})}\le\eta_+^{j-\Pi_2(v_{xy})},\ 
 \text{ and }\  \eta_-^{j-\Pi_2(v_{xy})}\le \frac{\pi(w_i)}{\pi(v_{xy})}\le \eta_+^{j-\Pi_2(v_{xy})}.
\]
Therefore
\[
d_\rho(\Phi(x),\Phi(y))\le \frac{2}{1-\eta_+}\, \pi(v_{xy}).
\]
On the other hand, by~(H3) we have that for all curves $\gamma$ in $X$ that have $\Phi(x)$ and $\Phi(y)$ as their
endpoints (with respect to the metric $d_\rho$),
\[
\int_\gamma\pi(\gamma(t))\, dt\ge K_1^{-1}\, \pi(v_{xy}).
\]
It follows that 
\begin{equation}\label{eq:dRho-vs-pi}
K_1^{-1} \pi(v_{xy})\le d_\rho(\Phi(x),\Phi(y))\le \frac{2}{1-\eta_+}\, \pi(v_{xy}).
\end{equation}
Finally, we note from~\eqref{eq:pi-n} that 
\[
  \eta_-^{n_{xy}}\le  \pi(v_{xy})\le \eta_+^{n_{xy}-1-|j_0|}.
\]
Recall that we choose $n_{xy}$ so that $\alpha^{-n_{xy}}<d(x,y)\le \alpha^{1-n_{xy}}$.
Now the definition of $\tau_+$ and $\tau_-$, together with the choice of $n_{xy}$ above, gives us the validity of~\eqref{eq:biHolder}
with constant $C$ depending only on $\eta_-, \eta_+$, and $j_0$ (which in turn depends only on $\tau$ and $\alpha$).
The last claim of the theorem follows from~\eqref{eq:dRho-vs-pi}.

Note that $Z$ is compact. Therefore,
to prove that $\Phi$ is a homeomorphism, it now suffices to prove surjectivity of $\Phi$. 
Let $(w_k)_k$ be a Cauchy sequence in $(X,d_\rho)$ that is \emph{not} convergent
in $(X,d_\rho)$. By replacing $w_k$ with its nearest vertex if necessary, we may assume without loss of generality that
each $w_k$ is in the vertex set $V$ (for this change in the sequence gives us a Cauchy sequence that is equivalent to the 
original sequence). By passing to a subsequence if necessary, we may also assume that 
for each positive integer $k$,
\begin{itemize}
\item $d_\rho(w_k,w_{k+1})<(K_1^2\alpha)^{-k}$,
\item $\Pi_2(w_{k})<\Pi_2(w_{k+1})$.
\end{itemize}
Indeed, if there is some positive integer $n_0$ such that $\Pi_2(v_k)\le n_0$ for each positive integer $k$, then
the sequence lies in the $d_X$-ball $\{w\in X\, :\, d_X(w,w_0)\le n_0\}$ where $d_X$ is the graph metric on $X$
(obtained by considering path metric with each edge in $X$ to be of unit length). In this case, we would have 
from the proof of Lemma~\ref{lem:metric} that $d$ and $d_\rho$ are biLipschitz on this ball and hence $(w_k)_k$
would be convergent to a point in this $d_X$-ball with respect to $d_X$ and hence with respect to $d_\rho$, violating
our assumption that the sequence is not convergent in $(X,d_\rho)$. Thus the above two conditions can be met by
choosing a subsequence. 

For positive integers $k$ we set $x_k=\Pi_1(w_k)$. Then by the compactness of $Z$
we have that there is some $x_\infty\in Z$ and a subsequence of the sequence $(x_k)_k$, also denoted $(x_k)_k$,
such that $x_k\to x_\infty$ with respect to the metric $d_Z$. For each positive integer $n$ we choose $v_n\in V$
such that $d_Z(\Pi_1(v_n),x_\infty)<\alpha^{-n}$. As in the construction of $\Phi$ we know that $(v_k)_k$ is a
Cauchy sequence with respect to $d_\rho$, and that $\Phi(x_\infty)=[(v_n)_n]_\rho$ (where $[(v_n)_n]_\rho$ denotes the 
collection of all Cauchy sequences in $(X,d_\rho)$ that are equivalent to the Cauchy sequence $(v_n)_n$). We now
show that $(w_k)_k\in[(v_n)_n]_\rho$, for this would conclude the proof of surjectivity of $\Phi$. Since $d_Z(x_k,x_\infty)\to0$
as $k\to\infty$, for each positive integer $n$ we can find $k_n>n$ such that $d_Z(x_\infty,x_{k_n})<\alpha^{-n-1}$.
Then by the choice of $v_k$ we have that $d_Z(\Pi_2(v_n),x_{k_n})<\alpha^{1-n}$. Then with $u_n$ a common ancestor of
$v_n$ and $w_{k_n}$ with the largest value of $\Pi_2(u_n)$, we have from Condition~(H3) and~(H1) that
\[
d_\rho(v_n,w_{k_n})\approx\pi(u_n)\le \eta_+^{\Pi_2(u_n)}\to 0\text{ as }n\to\infty,
\]
the last assertion above following from~\eqref{eq:pi-n}.
It follows that $(w_{k_n})_n$ and $(v_n)_n$ are equivalent Cauchy sequences in $(X,d_\rho)$, completing the proof of
surjectivity of $\Phi$.
\end{proof}

\section{Quasisymmetry}\label{Sec:5}

Recall that a homeomorphism $\Psi:W\to Y$, with $(W,d_W)$ and $(Y,d_Y)$ two metric spaces, is quasisymmetric if there
is a homeomorphism $\eta:(0,\infty)\to(0,\infty)$ with $\lim_{t\to 0^+}\eta(t)=0$ such that for every triple of distinct points
$x_1,x_2,x_3\in W$ we have
\[
\frac{d_Y(\Psi(x_1),\Psi(x_2))}{d_Y(\Psi(x_1),\Psi(x_3))}\le \eta\left(\frac{d_W(x_1,x_2)}{d_W(x_1,x_3)}\right).
\]
Given that $\eta$ is a homeomorphism, it can be seen that $\Psi^{-1}$ is also a quasisymmetry if $\Psi$ is. 
In the event that $W$ (and hence $Y$) is uniformly perfect, then $\eta$ can be chosen to be a power function; there are 
constants $C\ge 1$ and $0<\Theta\le 1$ such that the following choice of $\eta$ works:
\[
\eta(t)=C\, \max\{t^\Theta, t^{1/\Theta}\}.
\]
We refer the interested reader to the discussion on quasisymmetric and quasiconformal maps found in~\cite{Hei}.
We will see in Lemma~\ref{lem:unifPerf} in the next section that when $\rho$ satisfies Conditions~(H1) through~(H4),
$Z$ is necessarily uniformly perfect. We do not assume Condition~(H4) here, and so the space $Z$ need not be
uniformly perfect; however, the quasisymmetric maps we obtain are still of the above-mentioned power function format.

In this section we will focus on quasisymmetric aspects of the map $\Phi$ defined in the previous section. Here
Condition~(H2) plays a vital role. 

\begin{thm}\label{thm:qs}
Suppose that $\rho$ satisfies all three of the conditions~(H1), (H2), and~(H3). Then  the map
\[
\Phi\colon Z\to\partial_\rho X
\]
constructed in Definition~\ref{def:phi} is a quasisymmetric map.
\end{thm}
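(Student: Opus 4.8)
The plan is to derive the quasisymmetry of $\Phi$ directly from the comparison $d_\rho(\Phi(x),\Phi(y))\approx\pi(v_{xy})$ of Theorem~\ref{thm:biHolderBdy}, using that the weight $\pi$ telescopes along a vertical path. The resulting distortion function will be of power type, $\eta(t)=C\max\{t^{\tau_+},t^{\tau_-}\}$, so that in fact $\Phi$ turns out to be a power quasisymmetry.

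I would begin by fixing three distinct points $x_1,x_2,x_3\in Z$ together with a \emph{single} vertically descending path $P_1=(u_0\sim u_1\sim\cdots)$ with $\Pi_2(u_m)=m$ and $d_Z(\Pi_1(u_m),x_1)\le\alpha^{-m}$ for all $m$, as in Definition~\ref{def:phi}. Re-examining the proof of Theorem~\ref{thm:biHolderBdy} with $P_1$ playing the role of the $x_1$-path, once for the pair $(x_1,x_2)$ and once for the pair $(x_1,x_3)$, yields two vertices $v_{x_1x_2}=u_{m_2}$ and $v_{x_1x_3}=u_{m_3}$ lying on the common path $P_1$, whose levels $m_i=\Pi_2(v_{x_1x_i})$ satisfy \eqref{eq:pi-n}, namely $|m_i-n_{x_1x_i}|\le|j_0|+1$, where $n_{x_1x_i}$ is determined by $\alpha^{-n_{x_1x_i}}<d_Z(x_1,x_i)\le\alpha^{1-n_{x_1x_i}}$ and $j_0$ is the integer of that proof with $\alpha^{-j_0}<\tau-1\le\alpha^{1-j_0}$. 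By \eqref{eq:dRho-vs-pi}, $d_\rho(\Phi(x_1),\Phi(x_i))\approx\pi(v_{x_1x_i})$ with constants depending only on $\eta_-,\eta_+,K_1$ and on $K_0$ --- here Condition~(H2) enters, since it is (H2) that makes $\pi$ a well-defined function on $V$ (up to the factor $K_0$), see Remark~\ref{rem:pi-is-welpsd}. Because $v_{x_1x_2}$ and $v_{x_1x_3}$ sit on one vertical path, their $\pi$-values differ by a telescoping product of values of $\rho$, and Condition~(H1) then gives
\[
\frac{d_\rho(\Phi(x_1),\Phi(x_2))}{d_\rho(\Phi(x_1),\Phi(x_3))}\approx\frac{\pi(u_{m_2})}{\pi(u_{m_3})}\le\begin{cases}\eta_+^{\,m_2-m_3},&m_2\ge m_3,\\[3pt]\eta_-^{\,m_2-m_3},&m_2\le m_3.\end{cases}
\]

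Next I would convert these exponents into a ratio of base distances. Setting $s:=d_Z(x_1,x_2)/d_Z(x_1,x_3)$, the inequalities defining $n_{x_1x_2}$ and $n_{x_1x_3}$ give $\bigl|(m_2-m_3)-\log_\alpha(1/s)\bigr|\le C_0$ with $C_0:=2|j_0|+3$. Since $\tau_\pm$ were chosen so that $\eta_\pm=\alpha^{-\tau_\pm}$, in the first case $\eta_+^{\,m_2-m_3}=\alpha^{-\tau_+(m_2-m_3)}\le\alpha^{\tau_+C_0}s^{\tau_+}$ and in the second $\eta_-^{\,m_2-m_3}=\alpha^{-\tau_-(m_2-m_3)}\le\alpha^{\tau_-C_0}s^{\tau_-}$; as $0<\tau_+\le\tau_-$, both bounds are at most $\alpha^{\tau_-C_0}\max\{s^{\tau_+},s^{\tau_-}\}$. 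Hence
\[
\frac{d_\rho(\Phi(x_1),\Phi(x_2))}{d_\rho(\Phi(x_1),\Phi(x_3))}\le C\,\max\{s^{\tau_+},s^{\tau_-}\}
\]
for a constant $C$ depending only on $\alpha,\tau,\eta_-,\eta_+,K_0,K_1$. Since $0<\tau_+\le\tau_-$, the map $\eta(t):=C\max\{t^{\tau_+},t^{\tau_-}\}$ is an increasing self-homeomorphism of $(0,\infty)$ with $\eta(0^+)=0$, and the displayed inequality, valid for every ordered triple of distinct points of $Z$, is exactly the assertion that $\Phi$ is quasisymmetric.

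There is no deep obstacle here beyond Theorem~\ref{thm:biHolderBdy}. The two points that need care are: first, bookkeeping of the $O(|j_0|)$ discrepancy between each $m_i$ and $n_{x_1x_i}$ --- in the regime $m_2\approx m_3$ this slack must be absorbed into the multiplicative constant $C$ rather than into an exponent; and second, the observation that $v_{x_1x_2}$ and $v_{x_1x_3}$ may be taken on one common vertical path, which is what turns their $\pi$-ratio into a clean product of $\rho$-values and thereby lets Condition~(H2), via the path-independence of $\pi$, carry the argument.
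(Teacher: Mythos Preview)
Your proof is correct and follows essentially the same line as the paper's: both pass through \eqref{eq:dRho-vs-pi}, telescope $\pi(v_{x_1x_2})/\pi(v_{x_1x_3})$ into a product of values of $\rho$ along a vertical path, bound that product via~(H1), and arrive at the same power-type distortion $\eta(t)\approx\max\{t^{\tau_+},t^{\tau_-}\}$. The only cosmetic difference is that you place $v_{x_1x_2}$ and $v_{x_1x_3}$ on a common $x_1$-path from the start, whereas the paper allows them to sit on separate descending paths and then invokes~(H2) explicitly (via Remark~\ref{rem:pi-is-welpsd}) to replace $v_{xz}$ by its horizontal neighbor $w$ on the $v_{xy}$-path before telescoping; this is bookkeeping, not a different idea.
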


\begin{proof}
Let $x,y,z$ be three distinct points in $Z$. Then by Theorem~\ref{thm:biHolderBdy}, and in particular, by~\eqref{eq:dRho-vs-pi}, 
we have that
\[
\frac{d_\rho(\Phi(x),\Phi(y))}{d_\rho(\Phi(x),\Phi(z))}\approx \frac{\pi(v_{xy})}{\pi(v_{xz})}.
\]
Suppose first that $\Pi_2(v_{xy})\ge \Pi_2(v_{xz})$.  Let $\gamma$ be a descending path from the root vertex $w_0$
to $x$, passing through $v_{xy}$, and let $\beta$ be a descending path from $w_0$ to $x$, passing through $v_{xz}$.
Then there is a vertex $w$ in the path $\gamma$ such that $\Pi_2(w)=\Pi_2(v_{xz})$; it follows that 
$x\in B_{d_Z}(\Pi_1(v_{xz}),\alpha^{-\Pi_2(v_{x,z})})\cap B_{d_Z}(\Pi_1(w),\alpha^{-\Pi_2(w)})$, and so
$w\sim v_{xz}$. Therefore, by Condition~(H2) and Remark~\ref{rem:pi-is-welpsd}, 
we have that $\pi(v_{xz})\approx\pi(w)$ with comparison constant $K_0$.
Let $\gamma$ be the path $w_0=v_0\sim v_1\sim\cdots\sim v_{xy}$.
It follows that
\[
\frac{d_\rho(\Phi(x),\Phi(y))}{d_\rho(\Phi(x),\Phi(z))}\approx \frac{\pi(v_{xy})}{\pi(w)}=\prod_{j=\Pi_2(v_{xz})}^{\Pi_2(v_{xy})}\rho(w_j)
   \le \eta_+^{\Pi_2(v_{xy})-\Pi_2(v_{xz})}=\alpha^{-\tau_+(\Pi_2(v_{xy})-\Pi_2(v_{xz}))}.
\]
Now by~\eqref{eq:pi-n}, we see that 
\[
\frac{d_\rho(\Phi(x),\Phi(y))}{d_\rho(\Phi(x),\Phi(z))}\lesssim \left(\frac{d_Z(x,y)}{d_Z(x,z)}\right)^{\tau_+}.
\]
Now suppose that $\Pi_2(v_{xy})\le \Pi_2(v_{xz})$. Then, reversing the roles of $y$ and $z$ in the above argument gives
us (with $\beta=(w_0=u_0\sim u_1\sim\cdots)$ and $u$ the vertex in $\beta$ such that $\Pi_2(u)=\Pi_2(v_{xy})$), 
\[
\frac{d_\rho(\Phi(x),\Phi(z))}{d_\rho(\Phi(x),\Phi(y))}\approx \frac{\pi(v_{xz})}{\pi(u)}=\prod_{j=\Pi_2(v_{xy})}^{\Pi_2(v_{xz})}\rho(u_j)
\gtrsim \eta_-^{\Pi_2(v_{xz})-\Pi_2(v_{xy})}
=\alpha^{-\tau_-(\Pi_2(v_{xz})-\Pi_2(v_{xy}))}.
\]
Invoking~\eqref{eq:pi-n} again, we see that
\[
\frac{d_\rho(\Phi(x),\Phi(z))}{d_\rho(\Phi(x),\Phi(y))}\gtrsim \left(\frac{d_Z(x,z)}{d_Z(x,y)}\right)^{\tau_-},
\]
from whence we obtain
\[
\frac{d_\rho(\Phi(x),\Phi(y))}{d_\rho(\Phi(x),\Phi(z))}\lesssim \left(\frac{d_Z(x,y)}{d_Z(x,z)}\right)^{\tau_-}.
\]
Thus $\Phi$ is $\eta$-quasisymmetric with 
\[
\eta(t)\approx \max\{t^{\tau_+}, t^{\tau_-}\}.
\]
\end{proof}

Up to now we have made use of Conditions~(H1), (H2), and~(H3). In the next section we introduce and use Condition~(H4).

\section{Ahlfors regularity}\label{Sec:6}

For each non-negative integer $m$ and $x\in A_m$, and for each positive integer $n$ with $n>m$, we set
$D_n((x,m))$ to be the collection of all vertices $(y,n)\in V$ such that there is a vertically descending path from 
the vertex $(x,m)$ to $(y,n)$. 
Observe that such a path is a sub-path of a vertically descending path from the root vertex $w_0$
to $(y,n)$.

\begin{defn}\label{def:H4}
We say that $\rho:V\to\R$ satisfies Condition~(H4) if there exist $p>0$ and $K_2>0$ such that whenever $x\in A_m$ and
$n>m$, we have
\[
K_2^{-1}\, \pi((x,m))^p\le \sum_{v\in D_n(x,m)}\pi(v)^p\le K_2\, \pi((x,m))^p.
\]
\end{defn}

For the rest of this section we consider the Condition~(H4) in addition to the three conditions given in Definition~\ref{def:H1-3}.

\begin{lem}\label{lem:unifPerf}
Suppose that $\rho$ satisfies Conditions~(H1) through~(H4). Then $(Z,d_Z)$ is uniformly perfect, and
for each $x\in A_n$, $\diam_{d_\rho}\Phi((B_{d_Z}(x,\alpha^{-n})))\approx\pi((x,n))$.
\end{lem}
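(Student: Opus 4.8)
The plan is to prove the two assertions separately, starting with the diameter estimate, since the uniform perfectness will then follow by a packing/covering argument built on that estimate together with Condition~(H4).

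First I would establish that $\diam_{d_\rho}\Phi(B_{d_Z}(x,\alpha^{-n}))\approx\pi((x,n))$ for $x\in A_n$. For the upper bound: if $z,w\in B_{d_Z}(x,\alpha^{-n})$, then choosing descending sequences $(v_j)$, $(w_j)$ for $z$ and $w$ as in Definition~\ref{def:phi}, the vertex $(x,n)$ (or a neighbor of the relevant level-$n$ vertices — here one uses $\tau\ge 2\alpha^2+1$ as in Remark~\ref{rem:pi-is-welpsd}) is a common ancestor at level $n$, so the latest common ancestor $v_{zw}$ has $\Pi_2(v_{zw})\ge n$, whence by~\eqref{eq:pi-n}-type reasoning and~(H2), $\pi(v_{zw})\lesssim\pi((x,n))$; then~\eqref{eq:dRho-vs-pi} gives $d_\rho(\Phi(z),\Phi(w))\lesssim\pi((x,n))$. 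For the lower bound, I would exhibit two points $z,w\in B_{d_Z}(x,\alpha^{-n})$ whose latest common ancestor sits at level exactly $\approx n$ with $\pi$-value $\approx\pi((x,n))$; the natural candidate is to take $z=x$ itself together with a point $w$ at distance $\approx\alpha^{-n}$ from $x$ — but producing such a $w$ inside the ball is precisely where uniform perfectness would be needed, so instead I would argue more carefully: use that $(x,n)$ has at least two distinct children $(x,n+1)=(x,n+1)$ and some $(x',n+1)$ with $x'\ne x$ (such a child exists because $A_n\subsetneq A_{n+1}$ in a neighborhood of $x$, unless $Z$ is finite, which the doubling + positive-diameter hypotheses preclude at small scales — this point needs care), and the two boundary points obtained by descending through these two children have latest common ancestor $(x,n)$, giving $d_\rho\gtrsim K_1^{-1}\pi((x,n))$ by~(H3). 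Both these points lie in $B_{d_Z}(x,\alpha^{-n})$ by the estimate $d_Z(\Pi_1(v_j),x)\le\sum_{k\ge n}\alpha^{1-k}$ controlled appropriately (shrinking the ball radius by a universal factor if necessary, which is harmless for the $\approx$ statement).

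Next, for uniform perfectness: fix $z\in Z$ and $0<r<\diam(Z)/2$, and suppose toward a contradiction that $B_{d_Z}(z,r)\setminus B_{d_Z}(z,r/C)=\emptyset$ for a large constant $C$ to be chosen. Pick $n$ with $\alpha^{-n}\approx r$. Then $B_{d_Z}(z,\alpha^{-n})$ is, up to the above emptiness assumption, essentially the single point $z$ together with its far complement, so every descendant vertex in $D_{n'}(x,n)$ for $x\in A_n$ near $z$ and $n'$ slightly larger than $n$ actually has $\Pi_1$ very close to $z$; quantitatively the ball $B_{d_Z}(z,\alpha^{-n})$ contains no point at distance in $(\alpha^{-n'}\cdot\text{const},\alpha^{-n})$ from $z$ once $C$ is large relative to $\alpha^{n'-n}$. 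This forces $D_{n'}(x,n)$ to be a singleton (or a boundedly small set independent of the gap $n'-n$), so $\sum_{v\in D_{n'}(x,n)}\pi(v)^p\le (\text{bounded number})\cdot\eta_+^{p(n'-n)}\pi((x,n))^p\to 0$ as $n'\to\infty$, contradicting the lower bound $K_2^{-1}\pi((x,n))^p$ in Condition~(H4). Choosing $C$ large enough to make this contradiction fire gives the uniform perfectness constant $C_U$ depending only on $\alpha,\tau,\eta_\pm,K_2,p$ and the doubling constant.

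The main obstacle I anticipate is the bookkeeping in the second part: making precise the claim that the emptiness of a thick annulus in $Z$ forces the descendant sets $D_{n'}(x,n)$ to be uniformly small, since a priori a vertex $(y,n')\in D_{n'}(x,n)$ only satisfies $d_Z(y,x)\lesssim\alpha^{-n}$, not $d_Z(y,z)$ small, and one must track the interplay between the ball of emptiness around $z$ and the balls $B_{d_Z}(y,\tau\alpha^{-n'})$ defining the graph structure — this requires choosing the gap $n'-n$ (equivalently, the constant $C$) correctly so that every relevant descendant's representative point is trapped inside $B_{d_Z}(z,r/C)$. The first part's lower-bound subtlety (guaranteeing a genuine second child, i.e. that $Z$ is locally not a single point at scale $\alpha^{-n}$) is minor by comparison and follows from $\diam(Z)>0$ together with doubling, but it should be stated explicitly.
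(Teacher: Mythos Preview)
Your argument for uniform perfectness is essentially the paper's argument and is correct: if the annulus $B_{d_Z}(x,\alpha^{-n})\setminus B_{d_Z}(x,\alpha^{-n-N})$ is empty, one checks that every child of $(x,n)$, then of $(x,n+1)$, etc., must equal $x$ itself, so $D_{n+N}(x,n)=\{(x,n+N)\}$, and then the lower bound in~(H4) forces $K_2\eta_+^{Np}\ge 1$, bounding $N$.

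The genuine gap is in your order of proof and, specifically, in the lower bound of the diameter estimate. You try to prove $\diam_{d_\rho}\Phi(B_{d_Z}(x,\alpha^{-n}))\gtrsim\pi((x,n))$ \emph{before} establishing uniform perfectness, by producing a ``second child'' $(x',n+1)$ of $(x,n)$ with $x'\ne x$. Your justification --- ``$A_n\subsetneq A_{n+1}$ in a neighborhood of $x$, unless $Z$ is finite, which the doubling + positive-diameter hypotheses preclude'' --- is false. A compact doubling space with positive diameter can have isolated points (take $Z=\{0\}\cup\{2^{-k}:k\ge 0\}\subset\R$); at an isolated point $x$ and for all large $n$, the vertex $(x,n)$ has \emph{only} the child $(x,n+1)$, and there is no point of $B_{d_Z}(x,\alpha^{-n})$ at distance $\approx\alpha^{-n}$ from $x$. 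The existence of such a point, for every $x$ and every scale, is exactly uniform perfectness, so your argument is circular.

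The fix is simply to reverse the order, as the paper does: prove uniform perfectness first (your own argument for this step is self-contained and does \emph{not} use the diameter estimate, despite what your opening paragraph claims), and then use it to produce $z\in B_{d_Z}(x,\alpha^{-n})\setminus B_{d_Z}(x,\alpha^{-n-N})$. For this $z$ one has $n_{xz}\approx n$, hence $\Pi_2(v_{xz})\approx n$ by~\eqref{eq:pi-n}, hence $\pi(v_{xz})\approx\pi((x,n))$ by~(H1) and~(H2), and the lower bound follows from~\eqref{eq:dRho-vs-pi}. Your upper-bound argument for the diameter is fine.
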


\begin{proof}
To prove uniform perfectness, it suffices to show that there is some positive integer $N>1$ such that 
for each positive integer $n$ and each $x\in A_n$, the annulus $B_{d_Z}(x,\alpha^{-n})\setminus B_{d_Z}(x,\alpha^{-n-N})$ is 
non-empty. To this end, suppose that $N>2$ is an integer and $x\in A_n$ such that
the annulus $B_{d_Z}(x,\alpha^{-n})\setminus B_{d_Z}(x,\alpha^{-n-N})$ is empty. Then from Condition~(H4) we see that
\[
\pi((x,n))^p\le K_2\pi((x,n+N))^p=K_2\pi((x,n))^p\prod_{j=0}^{N-1}\rho((x,n+j))^p\le K_2\eta_+^{Np}\, \pi((x,n))^p.
\]
It follows that $K_2\eta_+^{Np}\ge 1$. Hence if
\[
N>\frac{1}{p}\, \frac{\log(K_2)}{\log(1/\eta_+)},
\]
then the annulus $B_{d_Z}(x,\alpha^{-n})\setminus B_{d_Z}(x,\alpha^{-n-N})$ must be non-empty. It follows  that
$(Z,d_Z)$ is uniformly perfect, with uniform perfectness constant $C_U=\alpha^N$ where $N$ satisfies the above inequality.

The second claim now follows from the restriction on $N$ given above as well. Indeed, we can find $z\in B_{d_Z}(x,\alpha^{-n})$
such that $d_Z(x,z)\ge \alpha^{-n-N}$. With $v_{xz}$ as in Condition~(H3), we see from~\eqref{eq:pi-n} that
$\alpha^{-\Pi_2(v_{xz})}\approx \alpha^{-n_{xz}}\approx \alpha^{-n}$
and that the graph-distance between the vertices $v_{xy}$ and
$(x,n)$ is bounded by a constant that depends only on the constants $\eta_+,\eta_-, K_0$, and $K_1$.
By~(H2) and~(H1) we have that $\pi((v,n))\approx\pi(v_{xz})$.
Now by the last claim of Theorem~\ref{thm:biHolderBdy} we have that 
\[
\pi(v_{xz})\approx d_\rho(\Phi(x),\Phi(z))\le \diam_{d_\rho}(\Phi(B_{d_Z}(x,\alpha^{-n})).
\]
Now if we choose $w\in B_{d_Z}(x,\alpha^{-n})$ such that 
$\tfrac12\diam_{d_\rho}(\Phi(B_{d_Z}(x,\alpha^{-n}))\le d_\rho(\Phi(x),\Phi(w))$, then
as there is a vertically descending path from the root $w_0$, through $(x,n)$, ending at $\Phi(w)$, it follows that
$v_{xw}$ is a descendant of $(x,n)$; it follows that $\pi(v_{xw})\le \pi((x,n))$, and so by Theorem~\ref{thm:biHolderBdy} again,
\[
\frac12\diam_{d_\rho}(\Phi(B_{d_Z}(x,\alpha^{-n}))\le d_\rho(\Phi(x),\Phi(w))
  \approx\pi(v_{xw})\le \pi((x,n))\approx\pi(v_{xz}).
\]
The combination of the above two inequalities yields the final claim of this lemma.
\end{proof}

From now on we will denote $\Phi(x)$ by $x$ as well whenever $x\in Z$; thus we will
also conflate $\Phi(B_{d_Z}(x,\alpha^{-n})$ with $B_{d_Z}(x,\alpha^{-n})$, as this will not lead to confusion.

\begin{remark}\label{rem:quasiball}
We fix $0<l\le L<\infty$. A set $E\subset Z$ is said to be an $(L,l)$-quasi-ball in $(Z,\theta)$ with center $x\in E$ 
if there is some $\rho>0$
such that $B_\theta(x,l\rho)\subset E\subset B_\theta(x,L\rho)$. Now, for $x\in A_n$, we set
\[
r:=\sup_{y\in B_{d_Z}(x,\alpha^{-n})}\theta(x,y),\qquad \tau:=\inf_{y\in X\setminus B_{d_Z}(x,\alpha^{-n})}\theta(x,y).
\]
By the quasisymmetry of $(Z,\theta)$ with respect to $(Z,d_Z)$, we see that $r\le \eta(1)\, \tau$. If $\tau\ge r$, then
we have that $B_{d_Z}(x,\alpha^{-n})=B_\theta(x,r)$, and so we can take $l=L=1$ and $\rho=r$. If $\tau<r$, then we have that
$\tau<r\le \eta(1)\tau$, and so $B_\theta(x,\tau)\subset B_{d_Z}(x,\alpha^{-n})\subset B_\theta(x,r)\subset B_\theta(x,\eta(1)\tau)$,
and we can then take $\rho=\tau$ and $l=1$, $L=\max\{1,\eta(1)\}$. Thus for each $x\in A_n$ we have that
$B_{d_Z}(x,\alpha^{-n})$ is $(1,\max\{1,\eta(1)\})$-quasi-ball in $(Z,\theta)$ with center $x$.
\end{remark}

\begin{thm}
Suppose that 
$\rho$ satisfies Conditions~(H1), (H2), (H3), and~(H4). Then $(\partial_\rho X, d_\rho)$ is Ahlfors $p$-regular.
\end{thm}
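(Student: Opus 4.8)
The plan is to establish the two matching bounds
\[
K^{-1}\, r^p \le \mathscr{H}^p_{d_\rho}\big(B_{d_\rho}(\Phi(x),r)\big) \le K\, r^p
\]
for all $\Phi(x)\in\partial_\rho X$ and $0<r<\diam_{d_\rho}(\partial_\rho X)$, using the family of ``shadows'' $\Phi(B_{d_Z}(x,\alpha^{-n}))$ as a substitute for genuine balls. The crucial bridge is Lemma~\ref{lem:unifPerf}, which tells us $\diam_{d_\rho}\Phi(B_{d_Z}(x,\alpha^{-n}))\approx\pi((x,n))$, together with Remark~\ref{rem:quasiball} and the bi-H\"older estimate~\eqref{eq:biHolder} which guarantee that these shadows are comparable to metric balls in $(\partial_\rho X,d_\rho)$ at the right scale. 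First I would fix $\Phi(x)$ and $r$, and choose the integer $n$ so that $\pi((x,n))\approx r$; this is possible because $\rho$ takes values in $[\eta_-,\eta_+]\subset(0,1)$, so consecutive values $\pi((x,n))$ and $\pi((x,n+1))$ differ by a bounded factor, and $\pi((x,n))\to 0$. Then Lemma~\ref{lem:unifPerf} gives $B_{d_\rho}(\Phi(x), c\, r)\subset \Phi(B_{d_Z}(x,\alpha^{-n}))\subset B_{d_\rho}(\Phi(x), Cr)$ for suitable constants, so it suffices to compute $\mathscr{H}^p_{d_\rho}$ of the shadow.

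For the upper bound I would use Condition~(H4) directly: the vertices $D_m((x,n))$ with $m>n$ index shadows $\Phi(B_{d_Z}(\Pi_1(v),\alpha^{-m}))$ that cover $\Phi(B_{d_Z}(x,\alpha^{-n}))$ (every point of $Z$ in the big ball lies in some $\alpha^{-m}$-ball centered at a point of $A_m$, which descends from $(x,n)$), and each such shadow has $d_\rho$-diameter $\approx\pi(v)$. Since (H4) bounds $\sum_{v\in D_m((x,n))}\pi(v)^p\le K_2\,\pi((x,n))^p$ uniformly in $m$, and the diameters of these covering sets tend to $0$ as $m\to\infty$ (by (H1), $\pi(v)\le\eta_+^m$ for $v\in D_m$), we get $\mathscr{H}^p_{d_\rho}(\Phi(B_{d_Z}(x,\alpha^{-n})))\lesssim \pi((x,n))^p\approx r^p$. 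The lower bound is the more delicate direction: I would take an arbitrary countable cover $\{E_i\}$ of $\Phi(B_{d_Z}(x,\alpha^{-n}))$ by sets of small $d_\rho$-diameter, and for each $i$ pick $m_i$ with $\diam_{d_\rho}(E_i)\approx\alpha^{-\tau_+\! m_i}$ or, better, with $\eta_+^{m_i}\approx\diam_{d_\rho}(E_i)$; using the bi-H\"older bound~\eqref{eq:biHolder} I can locate a vertex $v_i\in V$ at level $\approx m_i$ such that $\Phi^{-1}(E_i)$ is contained in $B_{d_Z}(\Pi_1(v_i),C\alpha^{-m_i})$, hence (by the bounded overlap property, point~(9) of Definition~\ref{rem:propties-hyp}, and the comparability of $\pi$ on neighboring vertices from (H2)) $E_i$ meets only boundedly many shadows at level $m_i$ descending from $(x,n)$, and $\pi(v_i)^p\gtrsim\diam_{d_\rho}(E_i)^p$. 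Summing $\pi(\cdot)^p$ over the level-$m$ descendants that are ``hit'' and invoking the lower bound in (H4) (after grouping the $E_i$ by scale and using a Vitali-type or a layer-cake argument to see the hit vertices exhaust a definite proportion of $D_m((x,n))$) yields $\sum_i\diam_{d_\rho}(E_i)^p\gtrsim\pi((x,n))^p\approx r^p$.

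The main obstacle I anticipate is precisely this lower bound: turning an arbitrary efficient cover into a controlled cover by shadows at a single dyadic-type scale, and ensuring the lower inequality in (H4) can be applied to the collection of vertices actually hit by the cover. The cleanest route is probably to reduce to covers by shadows from the start — show that for Hausdorff-measure purposes on these self-similar-type boundaries one may assume each $E_i$ is itself (comparable to) a shadow $\Phi(B_{d_Z}(\Pi_1(v_i),\alpha^{-m_i}))$, because replacing $E_i$ by the union of the level-$m_i$ shadows it meets changes diameters and the count only by bounded factors. Once every covering element is a shadow, the problem becomes combinatorial: a collection of shadows covering $\Phi(B_{d_Z}(x,\alpha^{-n}))$, and one wants $\sum\pi(v_i)^p\gtrsim\pi((x,n))^p$. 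This follows by a stopping-time/telescoping argument: refine each $v_i$ down to a common level $m$ (large), observe that the refined family still covers and still satisfies $\sum_{\text{refined}}\pi(\cdot)^p\le K_2\sum_i\pi(v_i)^p$ by applying the upper bound in (H4) \emph{within} each $E_i$, while covering forces the refined family to contain all of $D_m((x,n))$ up to the bounded-overlap constant, so the lower bound in (H4) gives $\sum_{\text{refined}}\pi(\cdot)^p\gtrsim\pi((x,n))^p$. Chaining these two inequalities closes the estimate, and combined with the upper bound and the identification of shadows with balls from Lemma~\ref{lem:unifPerf} and Remark~\ref{rem:quasiball}, we conclude that $(\partial_\rho X,d_\rho)$ is Ahlfors $p$-regular.
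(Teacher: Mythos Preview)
Your plan is sound and would yield a correct proof, but it takes a genuinely different route from the paper. The paper does \emph{not} estimate the Hausdorff measure directly. Instead it builds, for each $n$, the discrete measure $\mu_n(E)=\sum_{x\in A_n\cap E}\pi((x,n))^p$, observes from~(H4) that $\mu_n(Z)\approx\mu_1(Z)$ uniformly in $n$, extracts a weak limit $\mu$, and then checks that $\mu(B_{d_Z}(x,r))\approx\pi((z_0,n_r))^p$ for a suitable vertex $(z_0,n_r)$ by a direct computation with $\mu_m$ for $m$ large; Lemma~\ref{lem:unifPerf} and Remark~\ref{rem:quasiball} then convert this into $\mu(B)\approx(\diam_{d_\rho}B)^p$. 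The payoff of the paper's approach is that it completely sidesteps the delicate lower Hausdorff bound: once one has \emph{any} measure satisfying $\mu(B)\approx r^p$, Ahlfors regularity is established and the comparability of $\mu$ with $\mathscr{H}^p_{d_\rho}$ follows a posteriori. Your approach, by contrast, identifies the regular measure as $\mathscr{H}^p_{d_\rho}$ from the outset and uses both directions of~(H4) in an elegant telescoping way (the upper half to push a shadow cover down to a common level, the lower half to compare back to $\pi((x,n))^p$); this is more hands-on but arguably more informative. The one place where your sketch deserves extra care is the step ``locate a vertex $v_i$ with $\pi(v_i)\approx\diam_{d_\rho}(E_i)$ and $\Phi^{-1}(E_i)\subset B_{d_Z}(\Pi_1(v_i),C\alpha^{-\Pi_2(v_i)})$'': the bi-H\"older bound alone does not pin down the level of $v_i$, and you should instead invoke the sharper relation $d_\rho(\Phi(a),\Phi(b))\approx\pi(v_{ab})$ from Theorem~\ref{thm:biHolderBdy} together with the quasisymmetry of $\Phi$ (Theorem~\ref{thm:qs}) to see that $\Phi^{-1}(E_i)$ is a $d_Z$-quasi-ball at the correct scale.
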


\begin{proof}
To prove the claim we construct an Ahlfors $p$-regular measure on $Z$ as a weak limit of a sequence of measures on $Z$.
We fix a positive integer $n$ and set the measure $\mu_n$ on $Z$ as follows: for each Borel set $E\subset Z$, we set
\[
\mu_n(E):=\sum_{x\in A_n\cap E}\pi((x,n))^p.
\]
Note that, thanks to Condition~(H4), there is a relationship between $\mu_n$ and $\mu_m$ for $n>m$ given by
\[
K_2^{-1}\mu_n(Z)\le \mu_m(Z)\le C\, K_2\mu_n(Z).
\]
Here the constant $C$ is the bounded overlap constant mentioned in Definition~\ref{rem:propties-hyp}~(9).
It follows that for each positive integer $n$, 
\[
0<(CK_2)^{-1}\mu_1(Z)\le \mu_n(Z)\le K_2\mu_1(Z)<\infty, 
\]
and hence
the sequence of measures $(\mu_n)_n$ is tight on $Z$, and so there is a subsequence $(\mu_{n_k})_k$ and a 
Radon measure $\mu$ on $Z$ such that $\mu_{n_k}$ converges weakly to $\mu$; moreover,
$K_2^{-1}\mu_1(Z)\le \mu(Z)\le K_2\mu_1(Z)$. Thus $\mu$ is non-trivial on $Z$.

Note also that for each $x\in Z=\partial_\rho X$, 
\[
d_\rho(w_0,x)\le \sup_\gamma\int_\gamma\pi(\gamma(t))\, dt\le \sum_{n=0}^\infty \eta_+^n=\frac{1}{1-\eta_+}<\infty.
\]
We now wish to show that $\mu$ is Ahlfors $p$-regular on $Z$ with respect to the metric $d_\rho$. 
Since $\Phi$ is a quasisymmetric map from $(Z,d_Z)$ to $(Z,d_\rho)$, it follows that balls in the metric $d_Z$ are quasi-balls
in the metric $d_\rho$, see Remark~\ref{rem:quasiball} above. Hence it suffices to verify the regularity condition for $d_Z$-balls.

Note first that if $n$ is a positive integer and $z\in A_n$, then $\mu_n(B_{d_Z}(z,\alpha^{-n}))=\pi((z,n))^p$. 
We fix $x\in Z$ and $0<r<\tfrac12\diam_{d_Z}(Z)$, and choose the unique positive integer $n_r$ such that 
$\alpha^{-n_r-1}<r\le \alpha^{-n_r}$. Then, for integers $m>n_r+3$, by the definition of $\mu_m$ we have
\[
\mu_m(B_{d_Z}(x,r))=\sum_{z\in A_m\cap B_{d_Z}(x,r)}\, \pi((z,m))^p.
\]
With $z_0\in A_{n_r+2}$ such that $d(x,z_0)<\alpha^{-n_r-2}$, note that necessarily $z_0\in B_{d_Z}(x,r)$. Moreover, for
$m\ge n_r+3$, if $z\in A_m$ such that there is a vertically descending path from $(z_0,n_r+3)$ to $(z,m)$, then
$d(z,z_0)<\alpha^{-n_r-2}$, and so $d(x,z)<\alpha^{-n_r-2}+\alpha^{-n_r-2}<\alpha^{-n_r-1}\le r$, that is,
$z\in A_m\cap B_{d_Z}(x,r)$. Hence $\Pi_1(D_m(z_0,n_r+3))\subset B_{d_Z}(x,r)$, whence we obtain from Condition~(H4) that
\[
\mu_m(B_{d_Z}(x,r))\ge \sum_{v\in D_m(z_0,n_r+3)}\pi(v)^p\ge K_2^{-1}\, \pi(z_0,n_r+3)^p.
\]
Next, let us consider two points $z,w\in A_m\cap B_{d_Z}(x,r)$. Then 
$d(z,w)<\alpha^{1-n_r}$. Let $v_0\sim v_1\sim\cdots\sim v_m=(z,m)$
and $v_0\sim v_1'\sim\cdots\sim v_m'=(w,m)$ be two vertically descending paths from the root vertex $w_0$ to
the vertices $(z,m)$ and $(w,m)$ respectively. Then as $m\ge n_r+3$,  we can find $(z',n_r-1)$ in the first path and
$(w',n_r-1)$ in the second path. We will show that $(z',n_r-1)\sim (w',n_r-1)$. Indeed, 
\[
d(z',w)\le d(z',z)+d(z,w)< \frac{\alpha+1}{\alpha-1}\alpha^{-n_r}+\alpha^{1-n_r}\le\frac{2\alpha}{\alpha-1}\alpha^{1-n_r},
\]
and hence as $\tau\ge 2\alpha/(\alpha-1)$, we 
conclude that $w$ is in both $B_{d_Z}(w',\alpha^{1-n_r})$ and $B_{d_Z}(z', \tau\alpha^{1-n_r})$; that is, $(z',n_r-1)\sim (w',n_r-1)$.
It follows that 
\[
A_m\cap B_{d_Z}(x,r)\subset \bigcup_{(a,n_r-1)\sim (z',n_r-1)}D_m(a,n_r-1).
\]
Hence 
\[
\mu_m(B_{d_Z}(x,r))\le \sum_{(a,n_r-1)\sim (z',n_r-1)}\pi((a,n_r-1))^p.
\]
Given that $Z$ is doubling, the number of vertices $(a,n_r-1)$ that are neighbors of $(z',n_r-1)$ is at most the doubling constant,
and so by Condition~(H1) we have that
\[
\mu_m(B_{d_Z}(x,r))\le C\, \pi((z',n_r-1))^p.
\]
As the graph-distance between $(z',n_r-1)$ and $(z_0,n_r+3)$ is uniformly bounded, (by the doubling property of $Z$ again)
it follows that 
\[
 \mu_m(B_{d_Z}(x,r))\lesssim\, \pi((z_0,n_r+3))^p.
\]

From the above arguments, we obtain that for each positive integer $m\ge n_r+3$, 
\[
\mu_m(B_{d_Z}(x,r))\approx\pi((z_0,n_r+3))^p\approx\pi((z',n_r-1))^p.
\]
Now from Lemma~\ref{lem:unifPerf} together with Condition~(H1),
\[
\diam_{d_\rho}(B_{d_Z}(x,\alpha^{-n_r}))\approx\pi((z_0,n_r+3)),
\]
which completes the proof.
\end{proof}

\section{Obtaining $\rho$ from quasisymmetric change in metric}\label{Sec:7}

The sections prior to this, together, completes the proof of part {\bf I.} of Theorem~\ref{thm:main}. We now 
consider the converse directional claim given in part {\bf II.} of Theorem~\ref{thm:main}.
To do so, we consider a metric $\theta$ on $Z$ such that $(Z,d_Z)$ and $(Z,\theta)$ are quasisymmetric equivalent
with quasisymmetry parametric function $\eta:[0,\infty)\to[0,\infty)$.  In the prior sections we did not
have to assume that $(Z,d_Z)$ is uniformly perfect (with constant $C_U\ge 2$), 
and indeed, the uniform perfectness of $(Z,d_Z)$ followed from
Condition~(H4) needed only in establishing that $d_\rho$ is Ahlfors regular. In this section, however,
we seem to need uniform perfectness of $(Z,d_Z)$ a priori, and then construct a choice of function $\rho$ corresponding
to the quasisymmetry. We will give this construction in Definition~\ref{def:rho} below.

The standing assumptions in this section are that
$(Z,d_Z)$ is compact, doubling, and uniformly perfect, and that $(Z,\theta)$ is quasisymmetric to $(Z,d_Z)$. We will
also use the construction of hyperbolic filling from Section~\ref{Sec:2}, with $\alpha\ge 2$ and
$\tau\ge \max\{\alpha^2+1, C_U^3(C_U^2-4)^{-1}\}$, 
where $C_U$ is the uniform perfectness constant of $(Z,d_Z)$.
We will soon also require $\alpha>C_U^3$, but this requirement is not needed in the first lemma below. 
We reserve the notation
$B(x,r)$ to denote balls in $Z$, centered at $z\in Z$, of radius $r$ with respect to the metric $d_Z$.
Balls with respect to the metric $\theta$ will be denoted $B_\theta(x,r)$.

\begin{lem}\label{lem:QS-nbrs}
Let $x,y\in A_n$ such that $(x,n)\sim (y,n)$. Then 
\[
\diam_\theta(B(x,\alpha^{-n}))\approx\diam_\theta(B(y,\alpha^{-n}))
\]
with the constant of comparison  given by $2\eta(1)\eta(2\tau C_U)$.

Moreover, if $x,z\in Z$ and $n_{xz}$ is the positive integer with $\alpha^{-n_{xz}}<d_Z(x,z)\le \alpha^{1-n_{xz}}$, then
\[
\diam_\theta(B(x,\alpha^{-n_{xz}}))\approx \theta(x,z),
\]
with comparison constant $\max\{2\eta(1),\, \eta(C_U\alpha)\}$.

Finally, if $x\in A_n$, then there exists $y\in B(x,\alpha^{-n})$ such that 
\[
\diam_\theta(B(x,\alpha^{-n}))\approx\theta(x,y)
\]
with comparison constant $2\eta(C_U)$.
\end{lem}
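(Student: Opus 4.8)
The plan is to use only two facts: the quasisymmetry estimate for the identity map $(Z,d_Z)\to(Z,\theta)$, namely $\theta(a,b)/\theta(a,c)\le\eta\bigl(d_Z(a,b)/d_Z(a,c)\bigr)$ for distinct $a,b,c\in Z$, and the uniform perfectness of $(Z,d_Z)$, which for $0<r<\diam(Z)/2$ supplies a point in $B(w,r)\setminus B(w,r/C_U)$. The one observation that drives all three statements is this: for any point $w'\ne w$ with $d_Z(w,w')\le r$ and any $a,b\in B(w,r)$,
\[
\theta(a,b)\le\theta(a,w)+\theta(w,b)\le 2\,\eta\!\left(\frac{r}{d_Z(w,w')}\right)\theta(w,w'),
\]
so that $\diam_\theta(B(w,r))\le 2\,\eta\bigl(r/d_Z(w,w')\bigr)\,\theta(w,w')$; and if in addition $w'\in B(w,r)$ then trivially $\theta(w,w')\le\diam_\theta(B(w,r))$. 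Thus any comparison point at $d_Z$-distance comparable to the radius pins $\diam_\theta(B(w,r))$ down to a multiplicative constant depending only on $\eta$ and that ratio, the ratio being $1$ when $w'$ lies essentially on the boundary sphere and $C_U$ when $w'$ is the point furnished by uniform perfectness.

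I would prove the ``Finally'' assertion first: apply uniform perfectness at scale $r=\alpha^{-n}$ to get $y\in B(x,\alpha^{-n})$ with $\alpha^{-n}/C_U\le d_Z(x,y)<\alpha^{-n}$; the inequality $\theta(x,y)\le\diam_\theta(B(x,\alpha^{-n}))$ is immediate, and the displayed estimate with ratio at most $C_U$ gives $\diam_\theta(B(x,\alpha^{-n}))\le 2\eta(C_U)\,\theta(x,y)$. For the ``Moreover'' assertion, the choice of $n_{xz}$ gives $\alpha^{-n_{xz}}<d_Z(x,z)\le\alpha^{1-n_{xz}}$, so $z$ is a point at $d_Z$-distance comparable to the radius $\alpha^{-n_{xz}}$ lying just outside $B(x,\alpha^{-n_{xz}})$. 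Since $d_Z(x,a)<\alpha^{-n_{xz}}<d_Z(x,z)$ for $a\in B(x,\alpha^{-n_{xz}})$, the displayed estimate applies with ratio $<1$ and yields $\diam_\theta(B(x,\alpha^{-n_{xz}}))\le 2\eta(1)\,\theta(x,z)$; conversely, taking $w\in B(x,\alpha^{-n_{xz}})$ with $d_Z(x,w)\ge\alpha^{-n_{xz}}/C_U$ from uniform perfectness and using $d_Z(x,z)/d_Z(x,w)\le\alpha C_U$ gives $\theta(x,z)\le\eta(\alpha C_U)\,\theta(x,w)\le\eta(\alpha C_U)\,\diam_\theta(B(x,\alpha^{-n_{xz}}))$. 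Together these give the comparison constant $\max\{2\eta(1),\eta(C_U\alpha)\}$.

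For the first assertion the two distance facts to record are $\alpha^{-n}\le d_Z(x,y)$ (because $A_n$ is $\alpha^{-n}$-separated and $x\ne y$) and $d_Z(x,y)<2\tau\alpha^{-n}$ (because $(x,n)\sim(y,n)$ is a horizontal edge, so $B(x,\tau\alpha^{-n})\cap B(y,\tau\alpha^{-n})\ne\emptyset$). By symmetry it suffices to bound $\diam_\theta(B(y,\alpha^{-n}))$ from above by $\diam_\theta(B(x,\alpha^{-n}))$. Here $x$ itself serves as the ``just outside'' comparison point for the ball $B(y,\alpha^{-n})$: for $a\in B(y,\alpha^{-n})$ one has $d_Z(y,a)<\alpha^{-n}\le d_Z(y,x)$, so the displayed estimate gives $\diam_\theta(B(y,\alpha^{-n}))\le 2\eta(1)\,\theta(x,y)$. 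Then, choosing $x_0\in B(x,\alpha^{-n})$ with $d_Z(x,x_0)\ge\alpha^{-n}/C_U$ by uniform perfectness, the bound $d_Z(x,y)/d_Z(x,x_0)<2\tau C_U$ and quasisymmetry give $\theta(x,y)\le\eta(2\tau C_U)\,\theta(x,x_0)\le\eta(2\tau C_U)\,\diam_\theta(B(x,\alpha^{-n}))$. Chaining the two inequalities yields the bound with constant $2\eta(1)\eta(2\tau C_U)$, and interchanging $x$ and $y$ completes it.

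No step presents a genuine difficulty, and the small point I would watch is that uniform perfectness is asserted only at scales below $\diam(Z)/2$ whereas I invoke it at scale $\asymp\alpha^{-n}$. For the indices actually in play this is harmless: in the first assertion $x\ne y$ in $A_n$ already forces $\alpha^{-n}\le d_Z(x,y)\le\diam(Z)$, and in the other two cases one simply applies uniform perfectness at a radius slightly below $\min\{\alpha^{-n},\diam(Z)/2\}$, which in the worst case replaces $C_U$ by a fixed multiple of itself and only inflates the stated constants by a controlled factor.
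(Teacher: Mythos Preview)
Your argument is correct and follows essentially the same route as the paper's proof: in each part one uses a comparison point at $d_Z$-distance comparable to the radius (either the neighboring center, the point $z$, or a point supplied by uniform perfectness) together with the quasisymmetry inequality, obtaining the same constants $2\eta(1)\eta(2\tau C_U)$, $\max\{2\eta(1),\eta(C_U\alpha)\}$, and $2\eta(C_U)$. The only cosmetic differences are that you package the key estimate once as a general observation and bound $\diam_\theta$ via the triangle inequality $\theta(a,b)\le\theta(a,w)+\theta(w,b)$, whereas the paper instead selects a single point $x_1$ with $\diam_\theta\le 2\,\theta(x,x_1)$; both devices produce the identical factor of~$2$ and the same $\eta$-arguments.
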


\begin{proof}
The claim in the first part of the lemma follows immediately if $x=y$, so we assume without loss of generality that $x\ne y$. Then
we have that $\alpha^{-n}\le d_Z(x,y)\le 2\tau\alpha^{-n}$.

Let $x_1\in B(x,\alpha^{-n})$ and $\widehat{y_1}\in B(y,\alpha^{-n})$ such that
\begin{align*}
\diam_\theta(B(x,\alpha^{-n}))\le 2\, \theta(x,x_1), 
 \qquad d_Z(\widehat{y_1},y)\ge \alpha^{-n}/C_U.
\end{align*}
Then by the quasisymmetry of the metric $\theta$ with respect to $d_Z$, we see that
\[
\frac{\theta(x_1,x)}{\theta(y,x)}\le \eta\left(\frac{d_Z(x_1,x)}{d_Z(y,x)}\right)\le \eta\left(\frac{\alpha^{-n}}{\alpha^{-n}}\right)=\eta(1).
\]
Therefore, by the choice of $x_1$, we have that
\begin{equation}\label{eq:1a}
\diam_\theta(B(x,\alpha^{-n}))\le 2\eta(1)\, \theta(y,x).
\end{equation}
Again by the quasisymmetry,
\[
\frac{\theta(y,x)}{\theta(\widehat{y_1},y)}\le \eta\left(\frac{2\tau\alpha^{-n}}{\alpha^{-n}/C_U}\right)=\eta(2\tau C_U),
\]
and so
\begin{equation}\label{eq:1b}
\theta(y,x)\le \eta(2\tau C_U)\, \theta(\widehat{y_1},y)\le \eta(2\tau C_U)\, \diam_\theta(B(y,\alpha^{-n})).
\end{equation}
Combining~\eqref{eq:1a} with~\eqref{eq:1b} gives
\[
 \diam_\theta(B(x,\alpha^{-n}))\le 2\eta(1)\eta(2\tau C_U)\, \diam_\theta(B(y,\alpha^{-n})).
\]
The first part of the lemma is now proved by reversing the roles of $x$ and $y$ in the above argument.

To prove the second part of the lemma, note that 
we can find $x_1,\widehat{x_1}\in B(x,\alpha^{-n_{xz}})$
such that
\[
d_Z(x,\widehat{x_1})\ge \alpha^{-n_{xz}}/C_U, \qquad \diam_\theta(B(x,\alpha^{-n_{xz}}))\le 2\theta(x_1,x).
\]
Then
\[
\frac{\theta(x,x_1)}{\theta(x,z)}\le \eta\left(\frac{\alpha^{-n_{xz}}}{\alpha^{-n_{xz}}}\right)=\eta(1),
\]
and so $\diam_\theta(B(x,\alpha^{-n_{xz}}))\le 2\eta(1)\, \theta(x,z)$. On the other hand,
\[
\frac{\theta(x,z)}{\theta(x,\widehat{x_1})}\le\eta\left(\frac{\alpha^{1-n_{xz}}}{\alpha^{-n_{xz}}/C_U}\right)=\eta(C_U\alpha).
\]
It follows that 
\[
\frac{\theta(x,z)}{\eta(C_U\alpha)}\le \theta(x,\widehat{x_1})\le \diam_\theta(B(x,\alpha^{-n_{xz}})).
\]
The second claim of the lemma follows.

To prove the final claim of the lemma, by the use of uniform perfectness we can find a point 
$y\in B(x,\alpha^{-n})\setminus B(x,\alpha^{-n}/C_U)$. Let $\widehat{x}\in B(x,\alpha^{-n})$ such that
$\theta(\widehat{x},x)\ge \diam_\theta(B(x,\alpha^{-n}))/2$. Then
\[
\frac{\theta(\widehat{x},x)}{\theta(y,x)}\le \eta\left(\frac{\alpha^{-n}}{\alpha^{-n}/C_U}\right)=\eta(C_U).
\]
It follows that
\[
\frac12\diam_\theta(B(x,\alpha^{-n}))\le \eta(C_U)\, \theta(y,x)\le \eta(C_U)\, \diam_\theta(B(x,\alpha^{-n})).
\]
\end{proof}

Throughout the rest of this section we also assume that 
\begin{equation}\label{eq:hodgepodge}
C_U>2, \qquad \alpha>C_U^3,\qquad \tau\ge\max\bigg\lbrace\alpha^2+1, \frac{2C_U^3}{C_U^2-4}\bigg\rbrace.
\end{equation}
The first of the above conditions can always be assumed without loss of generality, and the remaining two
conditions merely give us control over the hyperbolic filling parameters $\alpha$ and $\tau$ in terms of $C_U$.
Note that these assumptions are independent of the quasisymmetric metric $\theta$. 
We are now ready to construct the function $\rho:V\to[0,\infty)$ as follows.

As in~\cite{Car} and in the converse statement given in Theorem~\ref{thm:main},
\emph{we assume that $(Z,\theta)$ is also an Ahlfors $p$-regular space for some $p>0$, and set $\mu$ to be
the $p$-dimensional Hausdorff measure on $Z$ induced by the metric $\theta$.}

\begin{defn}\label{def:rho}
We fix a maximal spanning tree $T\subset X$ of the graph $X$ such that $w_0$ is the root of the spanning tree made up
solely of vertical edges, and so that if $[v,w]$ is an edge in $T$ with $w$ a child of $v$, then 
$d_Z(\Pi_1(v),\Pi_1(w))<\alpha^{-\Pi_2(v)}$.
We set $\rho(w_0)=\mu(Z)^{1/p}$, and for each positive integer $n$ and $x\in A_n$ we set
\[
\rho((x,n))=\left(\frac{\mu(B(x,\tau\alpha^{-n}))}{\mu(B(z,\tau\alpha^{1-n}))}\right)^{1/p},
\]
where $z\in A_{n-1}$ such that $(z,n-1)$ is the parent of $(x,n)$ in $T$.
\end{defn}

While the definition of $\rho$ uses the measure $\mu$ associated with the metric $\theta$, the balls $B(x,\tau\alpha^{-n})$
are with respect to the metric $d_Z$. However, note that as $\theta$ is quasisymmetric with respect to $d_Z$, balls
with respect to the metric $d_Z$ are quasi-balls with respect to the metric $\theta$, as seen in Remark~\ref{rem:quasiball}.
The reason for using the balls
with respect to $d_Z$ is that we do not know what the $\theta$-radius of corresponding balls should be.

In~\cite[(2.11)]{KM} and~\cite[Proof of Theorem~3.14(b)]{KM}, (see~\cite[(2.17)]{KM} for yet another variant)
the authors propose an alternative construction of $\rho$ in the absence of Ahlfors regularity:
\[
\rho((x,n)):=\frac{\diam_\theta(B(x,\alpha^{-n}))}{\diam_\theta(B(z,\alpha^{1-n}))},
\]
where $z\in A_{n-1}$ such that $(x,n)$ is a child of $(z,n-1)$ in $T$.  However, their construction
requires the choice of parameter $\alpha$ involved in the hyperbolic filling to be dependent on the quasisymmetry
scaling function $\eta$ in relation to the uniform perfectness constant $C_U$, 
and so gives a slightly different result than that of~\cite{Car}; 
see the comment at the top of page~33 of~\cite{KM}, where the uniform perfectness constant $C_U$ is denoted $K_P$,
$\tau$ is denoted by $\lambda$, and $\alpha$ is denoted by $a$.

\begin{lem}\label{lem:rho-seek-H1}
The function $\rho$ from Definition~\ref{def:rho} satisfies
Conditiions~(H1) of Definition~\ref{def:H1-3}.
\end{lem}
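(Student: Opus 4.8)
The plan is to show that $\rho((x,n))$ stays bounded between two constants $\eta_-$ and $\eta_+$ in $(0,1)$, using Ahlfors $p$-regularity of $(Z,\theta)$ together with the fact that $d_Z$-balls are quasi-balls in the $\theta$-metric. First I would rewrite $\rho((x,n))^p$ as the ratio $\mu(B(x,\tau\alpha^{-n}))/\mu(B(z,\tau\alpha^{1-n}))$, where $(z,n-1)$ is the parent of $(x,n)$ in the spanning tree $T$, so that $d_Z(\Pi_1(z),x)<\alpha^{1-n}$. The key observation is the nesting $B(x,\tau\alpha^{-n})\subset B(z,\tau\alpha^{1-n})$: indeed, if $w\in B(x,\tau\alpha^{-n})$ then $d_Z(w,z)\le d_Z(w,x)+d_Z(x,z)<\tau\alpha^{-n}+\alpha^{1-n}\le\tau\alpha^{1-n}$ since $\alpha\ge 2$ and $\tau\ge 2$. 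Monotonicity of $\mu$ immediately gives $\rho((x,n))\le 1$; to get the strict upper bound $\eta_+<1$ I would instead compare both numerator and denominator to powers of their radii via Ahlfors regularity: $\mu(B(x,\tau\alpha^{-n}))\approx (\tau\alpha^{-n})^p$ and $\mu(B(z,\tau\alpha^{1-n}))\approx (\tau\alpha^{1-n})^p$, so that $\rho((x,n))^p\approx \alpha^{-p}$, uniformly in $x$ and $n$, with comparison constant depending only on the Ahlfors regularity constant of $(Z,\theta)$ and on $\alpha,\tau$. This forces $\eta_-\le\rho((x,n))\le\eta_+$ with $\eta_+$ a fixed constant strictly less than $1$ (after possibly shrinking, since $\alpha\ge 2$).

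One technical point that needs care is the applicability of Ahlfors regularity: the defining estimate $\mu(B_\theta(x,r))\approx r^p$ is stated for $\theta$-balls, but the balls appearing in $\rho$ are $d_Z$-balls. Here I would invoke Remark~\ref{rem:quasiball}: each $B(x,\tau\alpha^{-n})$ is an $(1,L)$-quasi-ball in $(Z,\theta)$ with $L=\max\{1,\eta(\tau/(\tau/C_U))\}$ or similar — more precisely, using the uniform perfectness of $(Z,d_Z)$ and quasisymmetry exactly as in Remark~\ref{rem:quasiball}, one finds $\rho_0>0$ with $B_\theta(x,\rho_0)\subset B(x,\tau\alpha^{-n})\subset B_\theta(x,L\rho_0)$, and then $\mu(B(x,\tau\alpha^{-n}))\approx \rho_0^p$. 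The only remaining subtlety is that the two quasi-ball radii $\rho_0$ for the numerator ball and the denominator ball must themselves be comparable to $\diam_\theta$ of those balls; this is handled by the final claim of Lemma~\ref{lem:QS-nbrs}, which gives $\diam_\theta(B(x,\alpha^{-n}))\approx\theta(x,y)$ for a suitably chosen $y$, and by the comparability $\diam_\theta(B(x,\tau\alpha^{-n}))\approx\diam_\theta(B(x,\alpha^{-n}))$ obtained by iterating the neighbor-comparison (first part of Lemma~\ref{lem:QS-nbrs}) a bounded number of times, since $\tau$ is a fixed constant.

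Assembling these, I would argue: by the second and third parts of Lemma~\ref{lem:QS-nbrs} combined with Ahlfors $p$-regularity, there are constants $c_1,c_2>0$ depending only on $p$, the Ahlfors constant, $\eta$, $C_U$, $\alpha$, $\tau$ such that $c_1\,\diam_\theta(B(x,\alpha^{-n}))^p\le \mu(B(x,\tau\alpha^{-n}))\le c_2\,\diam_\theta(B(x,\alpha^{-n}))^p$, and similarly for the denominator ball at scale $\alpha^{1-n}$ centered at $\Pi_1(z)$. Then $\rho((x,n))^p$ is comparable to $(\diam_\theta(B(x,\alpha^{-n}))/\diam_\theta(B(z,\alpha^{1-n})))^p$. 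Using the containment $B(x,\alpha^{-n})\subset B(z,\alpha^{1-n})$ (valid since $d_Z(x,\Pi_1(z))<\alpha^{1-n}$ and $\alpha^{-n}+\alpha^{1-n}\le\alpha^{1-n}$ fails, so instead bound $d_Z(w,\Pi_1(z))<\alpha^{-n}+\alpha^{1-n}\le 2\alpha^{1-n}\le\alpha\cdot\alpha^{1-n}$, i.e.\ $B(x,\alpha^{-n})\subset B(z,\alpha^{2-n})$, then use that the scale $\alpha^{2-n}$ vs.\ $\alpha^{1-n}$ differs by the bounded factor $\alpha$, handled again by Lemma~\ref{lem:QS-nbrs}), the ratio of diameters is bounded above by a fixed constant; combined with uniform perfectness giving a lower bound on $\diam_\theta(B(x,\alpha^{-n}))$ relative to the larger ball, the ratio is also bounded below by a fixed positive constant. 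This yields $0<\eta_-\le\rho((x,n))\le\eta_+<1$ for all $x,n$, which is exactly Condition~(H1). The main obstacle I anticipate is bookkeeping the various comparison constants coming from the three parts of Lemma~\ref{lem:QS-nbrs} and from Ahlfors regularity, and in particular verifying that the resulting $\eta_+$ is genuinely strictly below $1$ — this is where the restriction $\alpha\ge 2$ (and the freedom to replace $\eta_+$ by its maximum with, say, $1-\delta$ after checking $\rho((x,n))^p\lesssim\alpha^{-p}$) must be used carefully rather than just asserting $\rho\le 1$ from the naive nesting.
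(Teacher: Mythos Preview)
Your lower bound $\eta_->0$ is fine and matches the paper: Ahlfors regularity together with the quasi-ball property of $d_Z$-balls gives $\mu(B(x,\tau\alpha^{-n}))\ge c_1\,\mu(B(z,\tau\alpha^{1-n}))$ for a uniform $c_1>0$.

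The upper bound, however, has a real gap, and it is exactly the one you flag as ``the main obstacle'' without resolving. Your route produces
\[
\rho((x,n))^p\;\le\; C_A\cdot\left(\frac{\diam_\theta(B(x,\alpha^{-n}))}{\diam_\theta(B(z,\alpha^{1-n}))}\right)^p,
\]
where $C_A$ collects the comparison constants coming from Ahlfors regularity of $(Z,\theta)$ and the quasi-ball distortion $L=\max\{1,\eta(1)\}$. The diameter ratio is at most $1$ by the nesting, but $C_A$ depends on the quasisymmetry datum $\eta$ and on the Ahlfors constant, and there is no mechanism forcing $C_A<1$. Your suggested fix, ``use $\alpha\ge 2$ to check $\rho^p\lesssim\alpha^{-p}$,'' does not work: first, the estimate $\mu(B(x,\tau\alpha^{-n}))\approx(\tau\alpha^{-n})^p$ is false in general (the Ahlfors regularity is in the $\theta$-scale, not the $d_Z$-scale; a snowflake $\theta=d_Z^\beta$ already shows the $d_Z$-radius is the wrong quantity), and second, $\alpha$ is fixed in this section independently of $\theta$, so you cannot enlarge $\alpha$ to beat $C_A$.

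The paper sidesteps this entirely by a subtractive argument rather than a multiplicative one. It uses uniform perfectness of $(Z,d_Z)$ to locate a point $v_0\in B(z,\tfrac{\tau}{2}\alpha^{1-n})\setminus B(z,\tfrac{\tau}{2C_U}\alpha^{1-n})$ and then checks, using only the constraints~\eqref{eq:hodgepodge} on $\alpha,\tau,C_U$, that the entire ball $B(v_0,\tfrac{\tau}{\alpha^3}\alpha^{1-n})$ lies in $B(z,\tau\alpha^{1-n})\setminus B(x,\tau\alpha^{-n})$. Now Ahlfors regularity (via quasi-balls) gives $\mu(B(v_0,\tfrac{\tau}{\alpha^3}\alpha^{1-n}))\ge c_0\,\mu(B(z,\tau\alpha^{1-n}))$ for some $c_0\in(0,1)$, and hence
\[
\rho((x,n))^p\;=\;1-\frac{\mu\bigl(B(z,\tau\alpha^{1-n})\setminus B(x,\tau\alpha^{-n})\bigr)}{\mu(B(z,\tau\alpha^{1-n}))}\;\le\;1-c_0.
\]
The point is that the uncontrolled comparison constant lands \emph{inside} $1-c_0$, which is automatically strictly less than $1$ no matter how small $c_0$ is. That is the missing idea in your proposal.
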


\begin{proof}
Let $(x,n),(z,n-1)\in V$ such that $(x,n)\sim(z,n-1)$ in the tree $T$.
Now, if $w\in B(x,\tau\alpha^{-n})$,
then by triangle inequality and~\eqref{eq:hodgepodge},
\[
d_Z(w,z)<\tau\alpha^{-n}+\alpha^{-n}+\alpha^{1-n}=\left(\frac{\tau+1}{\alpha}+1\right)\alpha^{1-n}<\frac{\tau}{2}\, \alpha^{1-n}.
\]
That is, $B(x,\tau\alpha^{-n})\subset B(z,[1+\tfrac{1+\tau}{\alpha}]\alpha^{1-n})\subset B(z,\tau\alpha^{1-n})$.
By the uniform perfectness of  $(Z,d_Z)$ we can find a point 
$v_0\in B(z,\tfrac{\tau}{2}\alpha^{1-n})\setminus B(z,\tfrac{\tau}{2C_U}\alpha^{1-n})$. We now show that
$B(v_0,\tfrac{\tau}{\alpha^3}\alpha^{1-n})\subset B(z,\tau\alpha^{1-n})\setminus B(x,\tau\alpha^{-n})$.
Indeed, if $y\in B(v_0,\tfrac{\tau}{\alpha^3}\alpha^{1-n})$, then by triangle inequality we have 
\begin{align*}
d_Z(y,z)&<\frac{\tau}{\alpha^3}\alpha^{1-n}+\frac{\tau}{2}\alpha^{1-n}<\tau\alpha^{1-n},\\
d_Z(y,z)&\ge d_Z(z,v_0)-d_Z(v_0,y)\ge \left[\frac{\tau}{2C_U}-\frac{\tau}{\alpha^3}\right]\alpha^{1-n}
  \ge \left[1+\frac{\tau}{\alpha}\right]\alpha^{1-n},
\end{align*}
where we have used~\eqref{eq:hodgepodge} again and the fact that $\alpha^3>\alpha$. Hence
\[
\frac{\mu(B(x,\tau\alpha^{-n}))}{\mu(B(z,\tau\alpha^{1-n}))}
  =1-\frac{\mu(B(z,\tau\alpha^{1-n})\setminus B(x,\tau\alpha^{-n}))}{\mu(B(z,\tau\alpha^{1-n}))}
  \le 1-\frac{\mu(B(v_0,\tfrac{\tau}{\alpha^3}\alpha^{1-n}))}{\mu(B(z,\tau\alpha^{1-n}))}.
\]
As $(Z,d_Z)$ is quasisymmetric to $(Z,\theta)$, we know that balls in the metric $d_Z$ are quasi-balls in the metric $\theta$,
see Remark~\ref{rem:quasiball} above.
Hence, as in the proof of Lemma~\ref{lem:QS-nbrs}, by the Ahlfors regularity of $\mu$ with respect to theta, we have that 
\[
\frac{\mu(B(v_0,\tfrac{\tau}{\alpha^3}\alpha^{1-n}))}{\mu(B(z,\tau\alpha^{1-n}))}\ge c_0
\]
for some constant $0<c_0<1$ that depends on the quasisymmetry function $\eta$ and the Ahlfors regularity constant of $\mu$,
but does not depend on $v_0,x,z,n$. Hence we have that 
\[
\rho((x,n))\le (1-c_0)^{1/p}.
\]
By the Ahlfors regularity of $\mu$, we also have that $\rho(x,n)\ge c_1>0$ with $c_1$ depending only on the 
quasisymmetry parameter $\eta$ and the Ahlfors regularity constant of $\mu$. Thus we can take $\eta_-=c_1$ and
$\eta_+=(1-c_0)^{1/p}$ to complete the proof.
\end{proof}

\begin{lem}\label{lem:H3+H2}
The function $\rho$ satisfies Conditions~(H3) and~(H2).
\end{lem}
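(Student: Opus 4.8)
The plan is to first establish Condition~(H2), since (H3) will follow from it together with the bi-Hölder-type estimates already available and the Ahlfors regularity of $\mu$. For (H2), observe that if $w_0=w_0\sim w_1\sim\cdots\sim w_k=(x,k)$ is the vertical path in the spanning tree $T$ from the root to a vertex $(x,k)$, then the definition of $\rho$ telescopes: $\pi((x,k))=\prod_{j=0}^k\rho(w_j)=\mu(B(\Pi_1(w_k),\tau\alpha^{-k}))^{1/p}$, because each ratio cancels against the previous denominator. So along the \emph{tree} path, $\pi((x,k))^p=\mu(B(x,\tau\alpha^{-k}))$ exactly. The issue is that the vertical path used in~(H2) need not lie in $T$. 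Here I would use Remark~\ref{rem:pi-is-welpsd}: since $\alpha\ge 2$ and $\tau\ge 2\alpha^2+1>4$, any two vertical descending paths to the same vertex differ only through neighboring vertices at each level, and by the first part of Lemma~\ref{lem:QS-nbrs} (neighboring vertices at the same level have comparable $\theta$-diameters of their associated balls, hence—by Ahlfors $p$-regularity—comparable $\mu$-measures), the product $\pi$ computed along \emph{any} vertical path to $(x,k)$ is comparable, with a multiplicative constant $K_0$ depending only on $\eta$, the regularity constant, $\alpha$, $\tau$. This gives $\pi((x,k))^p\approx\mu(B(x,\tau\alpha^{-k}))$ for every choice of descending path, which is precisely~(H2).

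For Condition~(H3), fix $x,y\in X$ in different edges with the associated vertical paths and the vertex $v_{xy}$ as in Definition~\ref{def:H1-3}. By the identification just obtained, $\pi(v_{xy})^p\approx\mu(B(\Pi_1(v_{xy}),\tau\alpha^{-m}))$ where $m=\Pi_2(v_{xy})$, and by~\eqref{eq:pi-n}-type reasoning $\alpha^{-m}\approx d_Z(x,y)$ (up to a factor depending on $j_0$, hence on $\alpha,\tau$); thus $\pi(v_{xy})^p\approx\mu(B(x,\tau\alpha^{-m}))\approx\diam_\theta(B(x,\alpha^{-m}))^p\approx\theta(x,y)^p$, using the second part of Lemma~\ref{lem:QS-nbrs} and Ahlfors regularity. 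So it suffices to show $\int_\gamma\pi(\gamma(t))\,dt\gtrsim\theta(x,y)$ for every curve $\gamma$ in $X$ joining $x$ to $y$. The idea is that $\gamma$, projected to $Z$ via $\Pi_1$ along vertices, must "come up" to some level $\ell\le m+O(1)$: more precisely, $\gamma$ must pass through a vertex $v$ with $\Pi_2(v)$ no larger than roughly $m$, for otherwise the endpoints $x,y$—which are $d_Z$-distance $\approx\alpha^{-m}$ apart—could not both be reached by descending from levels deeper than $m$ while staying connected in the graph (this is exactly the content of point~(11) of Definition~\ref{rem:propties-hyp}, the inequality~\eqref{eq:go-in}). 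At that vertex $v$, the edge of $\gamma$ through $v$ contributes at least $\pi(v)\cdot(\text{unit length portion})\gtrsim\eta_-^{\Pi_2(v)}\gtrsim\eta_-^{m}\approx\pi(v_{xy})$, using that $\Pi_2(v)\le m+C$ and Condition~(H1); combining, $\int_\gamma\pi\,dt\gtrsim\pi(v_{xy})$, which is~(H3).

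The main obstacle I anticipate is making the "$\gamma$ must come up to level $\approx m$" step fully rigorous with explicit constants: one must handle the non-vertex points of $\gamma$ and control $\Pi_2$ along an arbitrary (possibly wildly oscillating) path, and the bookkeeping between the three relevant length scales—$d_Z(x,y)$, the level $m$ of $v_{xy}$, and the level at which $\gamma$ attains its minimum—requires the inequality~\eqref{eq:go-in} of Definition~\ref{rem:propties-hyp}(11) applied carefully, together with the standing assumption $\tau\ge\alpha^2+1$ from~\eqref{eq:hodgepodge} to guarantee the neighbor structure. A secondary technical point is verifying that the telescoping identity for $\pi$ along $T$, combined with the neighbor-comparability of $\mu$-measures of the $d_Z$-balls, genuinely yields a \emph{single} constant $K_0$ independent of the vertex and level; this hinges on the uniformity in Lemma~\ref{lem:QS-nbrs} and on the bounded degree of $X$, both of which are already available. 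Once these are in place, (H2) and (H3) follow by assembling the comparisons above.
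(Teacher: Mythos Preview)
Your treatment of (H2) is essentially the paper's: both establish the telescoping identity $\pi_T(v)^p=\mu(B(\Pi_1(v),\tau\alpha^{-\Pi_2(v)}))$ along the spanning tree $T$ and then invoke the first part of Lemma~\ref{lem:QS-nbrs} (together with Ahlfors regularity) to compare neighbors. The paper reverses the order, proving (H3) first and deducing (H2) in one line, but the content is the same.

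Your argument for (H3), however, has a real gap. You estimate $\int_\gamma\pi\,dt$ from below by the contribution at a single vertex $v$ on $\gamma$ of minimal level $\Pi_2(v)\le m+C$, obtaining
\[
\int_\gamma\pi\,dt\ \gtrsim\ \pi(v)\ \ge\ \eta_-^{\Pi_2(v)}\ \gtrsim\ \eta_-^{m},
\]
and then assert $\eta_-^m\approx\pi(v_{xy})$. That last comparison is false in general: Condition~(H1) only gives $\eta_-^m\le\pi(v_{xy})\le\eta_+^m$, and since $(\eta_-/\eta_+)^m\to 0$ as $m\to\infty$, the bound $\eta_-^m$ cannot control $\pi(v_{xy})$ from below with a uniform constant. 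Knowing merely the \emph{level} of the highest vertex on $\gamma$ tells you nothing about where $\Pi_1(v)$ sits relative to $\Pi_1(v_{xy})$, so there is no way to upgrade $\eta_-^m$ to $\mu(B(\Pi_1(v_{xy}),\tau\alpha^{-m}))^{1/p}$ by this route. The appeal to~\eqref{eq:go-in} correctly limits how deep $\gamma$ can stay, but that information alone is too coarse.

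The paper's proof of (H3) avoids this entirely by using the metric $\theta$ along the \emph{whole} curve rather than a single vertex. Writing the vertices of $\gamma$ as $u=u_0\sim u_1\sim\cdots\sim u_k=w$, one has $\pi(u_j)\approx\diam_\theta\bigl(B(\Pi_1(u_j),\tau\alpha^{-\Pi_2(u_j)})\bigr)$ for every $j$, and since consecutive balls intersect (because $u_j\sim u_{j+1}$ in $X$), the triangle inequality in $\theta$ yields
\[
\int_\gamma\pi\,dt\ \approx\ \sum_{j}\diam_\theta\bigl(B(\Pi_1(u_j),\tau\alpha^{-\Pi_2(u_j)})\bigr)\ \gtrsim\ \theta(\Pi_1(u),\Pi_1(w))\ \approx\ \pi(v_{uw}),
\]
the last comparison via the second part of Lemma~\ref{lem:QS-nbrs}. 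This chaining argument in the $\theta$-metric is the missing idea; it exploits the quasisymmetry directly rather than the crude envelope $[\eta_-^n,\eta_+^n]$ coming from~(H1).
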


\begin{proof}
From the definition of $\rho$, for each $v\in V$ we have that 
\[
\pi(v)=\mu(B(\Pi_1(v),\alpha^{-\Pi_2(v)}))^{1/p}.
\]
Since $d_Z$-balls are quasi-balls in the metric $\theta$ (see Remark~\ref{rem:quasiball}), 
we have by the Ahlfors $p$-regularity of $\mu$ with respect to $\theta$ that
\[
\pi(v)\approx\diam_\theta(B(\Pi_1(v),\alpha^{-\Pi_2(v)})).
\]
Let $u,w\in V$ be two distinct vertices,
and let $v_{uw}\in V$ be as described in Condition~(H3),  
and let $\gamma$ be any curve in $X$
with end points $u,w$. Denoting the vertices in $\gamma$ by $u=u_0\sim u_1\sim\cdots\sim u_k=w$, we have that
\[
\int_\gamma\pi(\gamma(t))\, dt=\sum_{j=0}^{k-1}\pi(u_j)
\approx\sum_{j=0}^{k-1}\diam_\theta\left(B(\Pi_1(u_j),\tau\alpha^{-\Pi_2(u_j)})\right).
\]
For $j=0,\cdots, k-1$ we have that $B(\Pi_1(u_j),\tau\alpha^{-\Pi_2(u_j)})$ intersects 
$B(\Pi_1(u_{j+1}),\tau\alpha^{-\Pi_2(u_{j+1})})$, see the construction given in Definition~\ref{rem:propties-hyp}.
Hence by the triangle inequality and the second part of Lemma~\ref{lem:QS-nbrs}, 
we have that 
\[
\int_\gamma\pi(\gamma(t))\, dt\gtrsim \theta(\Pi_1(u),\Pi_1(w))\approx\diam_\theta(B(\Pi_1(v_{uw}),\alpha^{-\Pi_2(v_{uw})})),
\]
from which the first claim of the lemma follows.

Condition~(H2) now follows from an application of the first part of Lemma~\ref{lem:QS-nbrs}.
\end{proof}

\begin{lem}\label{lem:H4}
The function $\rho$ satisfies Condition~(H4) given in Definition~\ref{def:H4}.
\end{lem}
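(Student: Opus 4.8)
The plan is to reduce Condition~(H4) to an elementary covering estimate in $(Z,d_Z)$, after observing that the definition of $\rho$ is rigged so that $\pi$ telescopes along the spanning tree $T$. If $w_0=v_0\sim v_1\sim\cdots\sim v_n=(y,n)$ is the (vertical) path in $T$ from the root to a vertex $v=(y,n)$, then each $v_{j-1}$ is the $T$-parent of $v_j$, so the denominator $\mu(B(\Pi_1(v_{j-1}),\tau\alpha^{1-j}))$ appearing in $\rho(v_j)^p$ is exactly the numerator $\mu(B(\Pi_1(v_{j-1}),\tau\alpha^{-(j-1)}))$ appearing in $\rho(v_{j-1})^p$; since moreover $\rho(w_0)^p=\mu(Z)=\mu(B(\Pi_1(w_0),\tau))$ (because $\diam(Z)<1<\tau$ makes $B(x_0,\tau)=Z$), the product $\pi(v)^p=\prod_{j=0}^n\rho(v_j)^p$ collapses and one gets
\[
\pi((y,n))^p=\mu\big(B(y,\tau\alpha^{-n})\big)\qquad\text{for every vertex }(y,n)\in V,
\]
which is the precise form of the identity recorded in the proof of Lemma~\ref{lem:H3+H2}. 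Hence Condition~(H4) is equivalent to
\[
K_2^{-1}\,\mu\big(B(x,\tau\alpha^{-m})\big)\ \le\ \sum_{(y,n)\in D_n(x,m)}\mu\big(B(y,\tau\alpha^{-n})\big)\ \le\ K_2\,\mu\big(B(x,\tau\alpha^{-m})\big)
\]
for all $x\in A_m$ and all $n>m$, and what remains is a covering argument together with the fact that, since $\mu$ is Ahlfors $p$-regular for $\theta$ and $d_Z$-balls are quasi-balls for $\theta$ (Remark~\ref{rem:quasiball}), $\mu$ is doubling with respect to $d_Z$; in particular concentric $d_Z$-balls whose radii differ by a fixed ratio have comparable $\mu$-measure (see also \cite{Hei}).

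Next I would establish the two-sided comparison between $D_n(x,m)$ and metric balls. On the one hand, if $(y,n)\in D_n(x,m)$, then walking down a vertically descending path from $(x,m)$ to $(y,n)$ and using $d_Z(\Pi_1(v),\Pi_1(v'))<\alpha^{-j}+\alpha^{-(j+1)}$ for a vertical edge between levels $j$ and $j+1$ (Definition~\ref{rem:propties-hyp}~(3)), one gets $d_Z(x,y)\le\sum_{j\ge m}2\alpha^{-j}\le 4\alpha^{-m}<\tau\alpha^{-m}$ by \eqref{eq:hodgepodge}, hence $B(y,\tau\alpha^{-n})\subset B(x,C'\alpha^{-m})$ with $C'=4+\tau/\alpha$. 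On the other hand, if $y\in A_n$ with $d_Z(x,y)<\alpha^{-m}$, then $(y,n)\in D_n(x,m)$: for each level $m<j<n$ pick $z_j\in A_j$ with $d_Z(z_j,y)<\alpha^{-j}$ (possible since $A_j$ covers $Z$), and set $z_m:=x$, $z_n:=y$; then $d_Z(z_j,z_{j+1})\le d_Z(z_j,y)+d_Z(z_{j+1},y)<\alpha^{-j}+\alpha^{-(j+1)}$ for every $m\le j<n$, so $(x,m)=(z_m,m)\sim(z_{m+1},m+1)\sim\cdots\sim(z_n,n)=(y,n)$ is a vertically descending path. Combining this with $Z=\bigcup_{y\in A_n}B(y,\alpha^{-n})$ and $\alpha^{n-m}\ge\alpha\ge2$, every $w\in B(x,\tfrac12\alpha^{-m})$ lies in some $B(y,\alpha^{-n})$ with $(y,n)\in D_n(x,m)$, so
\[
B\big(x,\tfrac12\alpha^{-m}\big)\ \subset\ \bigcup_{(y,n)\in D_n(x,m)}B(y,\alpha^{-n})\ \subset\ \bigcup_{(y,n)\in D_n(x,m)}B(y,\tau\alpha^{-n})\ \subset\ B\big(x,C'\alpha^{-m}\big).
\]

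The lemma then follows from the two bounds. For the upper bound, the family $\{B(y,\tau\alpha^{-n})\}_{y\in A_n}$ has bounded overlap by a constant $C$ depending only on the doubling constant of $Z$ and on $\alpha,\tau$ (Definition~\ref{rem:propties-hyp}~(9)), so
\[
\sum_{(y,n)\in D_n(x,m)}\mu\big(B(y,\tau\alpha^{-n})\big)\le C\,\mu\Big(\bigcup_{(y,n)\in D_n(x,m)}B(y,\tau\alpha^{-n})\Big)\le C\,\mu\big(B(x,C'\alpha^{-m})\big)\lesssim\mu\big(B(x,\tau\alpha^{-m})\big),
\]
the last step by doubling of $\mu$ on $d_Z$-balls. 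For the lower bound, by subadditivity and the displayed sandwich,
\[
\sum_{(y,n)\in D_n(x,m)}\mu\big(B(y,\tau\alpha^{-n})\big)\ge\mu\Big(\bigcup_{(y,n)\in D_n(x,m)}B(y,\alpha^{-n})\Big)\ge\mu\big(B(x,\tfrac12\alpha^{-m})\big)\gtrsim\mu\big(B(x,\tau\alpha^{-m})\big),
\]
again by doubling of $\mu$. Recalling $\pi((x,m))^p=\mu(B(x,\tau\alpha^{-m}))$, this is precisely Condition~(H4), with $K_2$ depending only on the doubling constant of $Z$, the parameters $\alpha,\tau$, the quasisymmetry function $\eta$, and the Ahlfors regularity constant of $\mu$.

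The computational core — the telescoping identity and the two covering inclusions — is routine. The one step deserving care is the reduction flagged above: that Ahlfors $p$-regularity of $\mu$ with respect to $\theta$ forces $\mu$ to be doubling with respect to $d_Z$, i.e.\ that concentric $d_Z$-balls with a fixed ratio of radii have comparable measure. This is where quasisymmetry of $\theta$ over $d_Z$ and uniform perfectness of $(Z,d_Z)$ enter (through Remark~\ref{rem:quasiball}, exactly as in the proofs of Lemmas~\ref{lem:QS-nbrs} and~\ref{lem:H3+H2}), and it is the only place in the argument where the quasisymmetry constants appear; alternatively one invokes the standard invariance of the doubling property under a quasisymmetric change of metric \cite{Hei}.
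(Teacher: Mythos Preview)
Your argument is correct and follows essentially the same route as the paper's own proof: identify $\pi(v)^p$ with the $\mu$-measure of a $d_Z$-ball via telescoping along the tree $T$, show $\Pi_1(D_n(x,m))\subset B(x,C\alpha^{-m})$ and $B(x,\tfrac12\alpha^{-m})\subset\bigcup_{v\in D_n(x,m)}B(\Pi_1(v),\alpha^{-n})$, and conclude using doubling of $\mu$ on $d_Z$-balls. Your version is in fact slightly more careful than the paper's in two places: you correctly carry the factor $\tau$ in the telescoped identity $\pi((y,n))^p=\mu(B(y,\tau\alpha^{-n}))$ (the paper drops it, which is harmless by doubling but imprecise), and you invoke the bounded-overlap property~(9) for the upper bound rather than the paper's claim of ``pairwise disjointness'' of the balls $B(\Pi_1(v),\alpha^{-n})$, which is not literally true for an $\alpha^{-n}$-separated set.
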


\begin{proof}
Let $m$ be a positive integer and $x\in A$. We fix a positive integer $n$ such that $n>m$. Note that for each
$v\in D_n(x,m)$ we have that there is a vertically descending path $(x,m)=v_0\sim v_1\sim\cdots\sim v_{m-n}=v$,
and so we have that
\[
d_Z(x,\Pi_1(v))\le \sum_{j=1}^{m-n}d_Z(\Pi_1(v_{j-1}),\Pi_1(v_j))\le \sum_{j=1}^{m-n}\alpha^{-m-j+1}+\alpha^{-m-j}
 \le \frac{2}{\alpha-1}\alpha^{-m},
\]
and so we have that $\Pi_1(D_n(x,m))\subset B(x,A\alpha^{-m})$, where $A=2/(\alpha-1)$.
It follows from the pairwise disjointness property of the balls $B(\Pi_1(v),\alpha^{-n})$
that
\begin{align*}
\sum_{v\in D_n(x,m)}\pi(v)^p = \sum_{v\in D_n(x,m)}\mu(B(\Pi_1(v),\alpha^{-n}))
 &\le \mu(B(x,(A+1)\alpha^{-m}))\\
 &\le C\, \mu(B(x,\alpha^{-m}))\\
 &=C\, \pi((x,m))^p.
\end{align*}
Here we have used the fact that $\mu$ is Ahlfors regular and also from the construction of $\rho$, for each
$z\in A_m$ we have $\mu(B(z,\alpha^{-m}))=\pi((z,m))^p$.

On the other hand, for each $y\in B(x,\alpha^{-m}/2)$ there exists $z\in A_n$ such that $d_Z(z,y)\le \alpha^{-n}$.
It follows from the choice of $\alpha\ge 2$ that 
\[
d_Z(x,z)\le d_Z(x,y)+d_Z(y,z)<\frac{\alpha^{-m}}{2}+\alpha^{-n}\le \frac{\alpha^{-m}}{2}+\frac{\alpha^{-m}}{\alpha}\le \alpha^{-m}.
\]
Therefore $(z,n)\in D_n(x,m)$. It follows that $B(x,\alpha^{-m}/2)\subset\bigcup_{v\in D_n(x,m)}\overline{B}(\Pi_1(v),\alpha^{-n})$,
and so we have 
\begin{align*}
\pi((x,m))^p=\mu(B(x,\alpha^{-m}))\le C \mu(B(x,\alpha^{-m}/2))&\le C\sum_{v\in D_n(x,m)}\mu(B(\Pi_1(v),\alpha^{-n}))\\
  &= C\, \sum_{v\in D_n(x,m)}\pi(v)^p,
\end{align*}
completing the proof.
\end{proof}

\section{An alternative formulation of Conditions (H1)---(H4), and a query}\label{sec:alternate-cond}

A different perspective of the construction in~\cite{Car} 
is to begin with a density function $\omega:V\to[0,1]$ on the vertex set $V$
such that the following four conditions are satisfied:
\begin{enumerate}
\item[{(H1-a)}] There exist $\eta_-, \eta_+$ with $0<\eta_-\le \eta_+<1$ such that for each $v,w\in V$ with $v\sim w$, 
we have
\[
\eta_-\le \frac{\omega(v)}{\omega(w)}\le \eta_+.
\]
\item[{(H2-a)}] There is a constant $K_0>0$ such that whenever $v,w\in V$ with $v\sim w$, we have
\[
\omega(v)\le K_0\, \omega(w).
\]
\item[{(H3-a)}] We extend $\omega$ to edges $v\sim w$ in $X$ linearly by setting $\omega(tv+(1-t)w)=t\omega(v)+(1-t)\omega(w)$,
where $tv+(1-t)w$ is the point on the edge $v\sim w$ that is a distance $t\in[0,1]$ away from $v$.
There is a constant $K_1>0$ such that 
whenever $\gamma$ is a curve in $X$ connecting $x, y\in X$, then 
\[
\int_\gamma \omega(\gamma(t))\, ds\ge K_1\, \omega(v_{xy}).
\]
\item[{(H4-a)}] There exist $p>0$ and $K_2>0$ such that whenever $x\in A_m$ and $n>m$, we have
\[
\frac{1}{K_2}\, \omega((x,m))^p\le \sum_{v\in D_n(x,m)}\omega(v)^p\le K_2\, \omega((x,m))^p.
\]
\end{enumerate}

It is not difficult to see that setting $\rho(v)=\frac{\omega(v)}{\omega(w)}$ where $w$ is any ancestor of $v$ with $w\sim v$,
we have the original four conditions with $\pi\sim\omega$. 
Indeed, Condition~(H1-a) corresponds to Condition~(H1) of~\cite{Car}, Condition~(H2-a) corresponds to 
Condition~(H2) of~\cite{Car}, Condition~(H3-a) corresponds to Condition~(H3) of~\cite{Car}, and
Condition~(H4-a) corresponds to Condition~(H4) of~\cite{Car}.
This perspective allows us to see that $d_\rho$ is actually
a conformal change in the path-metric on the graph $X$.
In this note we chose to use the original formulation of the conditions as found in~\cite{Car}, see Definition~\ref{def:H1-3} and
Definition~\ref{def:H4}, as the purpose of this note is to provide an analysis of~\cite[Theorem~1.1]{Car}. 
However, this perspective helps bridge the gap between the construction proposed in~\cite{Car}
and the conformal changes in metrics associated with a Harnack density $\omega:X\to(0,\infty)$. A density $\omega$ is
a Harnack density if there are constants $C, A\ge 1$ such thatfor $x,y\in X$ with $d(x,y)<A$ we have
\[
\frac{1}{C}\le \frac{\omega(x)}{\omega(y)}\le C.
\]
Given such a density $\omega$, we can equip the (not complete, but locally complete) metric space $(X,d)$ with the new
metric $(X,d_\omega)$ given by
\[
d_\omega(x,y)=\inf_\gamma\, \int_\gamma \omega\, ds,
\]
where $x,y\in X$ and the infimum is over all rectifiable curves in $X$ with end points $x$, $y$. 
The papers~\cite{BBS2, BHK, CKKSS, GS, Her, HD} are some of the many papers in current literature using
such transformations. 
Any density $\omega$ that satisfies the conditions listed at the beginning of this section is automatically
a Harnack density, thanks to~(H2-a).

\noindent {\bf Concluding remarks:}  
The results of~\cite{Car} link the quasisymmetric geometry of $Z$, the boundary of the hyperbolic filling, to the 
metrics on this filling. We note here that in potential theory as well there is a connection between 
nonlocal energy minimization problems on the boundary of compact doublings spaces and local energy minimization
problems in the hyperbolic filling~\cite{CKKSS}; and this connection is given through the perspective of Adams inequality,
on the compactification of the hyperbolic filling, via a measure supported on the boundary $Z$.
In~\cite{BBS} it was shown that if $Z$ is
equipped with a doubling measure, then its hyperbolic filling, modified according to the density 
$\omega_\alpha(x)=\alpha^{-d_X(x,w_0)}$, yields a uniform domain which can be equipped with a lift $\mu_\omega$
of the measure on $Z$ so that
the corresponding metric measure space $X_\alpha:=(X,d_{\omega_\alpha}, \mu_{\omega_\alpha})$ is bounded,
doubling and supports a $1$-Poincar\'e inequality, as does its metric completion (with the zero-extension of the
measure $\mu_{\omega_\alpha}$ to $\partial_{\omega_\alpha}X$). Moreover, the trace of the Sobolev classes on
$X_\alpha$ are certain Besov classes on $Z$. This fact was exploited in~\cite{CKKSS} to study Neumann boundary
value problem on $X_\alpha$ and link it to certain nonlocal fractional operators on $Z$, and one of the key motivating ideas
behind that analysis was an Adams-type inequality~\cite{Ad, AH, Mak}, with the singular measure given by the doubling measure on
$Z=\partial_{\omega_\alpha}X$. Such an inequality was possible because the measure on $Z$ has a co-dimensional
relationship with the measure $\mu_{\omega_\alpha}$ on $X$. If 
$X$ is equipped with the metric $d_\omega$ corresponding to a general density function $\omega$ satisfying
Conditions~(H1-a)---(H3-a) and $Z$ is equipped with a doubling measure, then it would be interesting to know whether
it is possible to lift the measure on $Z$ to $X$ so that a corresponding co-dimensional relationship between the lift and
the measure on $Z$ is valid and supports an Adams inequality, and would indicate a connection
between the study done in~\cite{Car} and nonlinear potential theory as in~\cite{LS}. 
The author recently was able to prove the validity
of Poincar\'e inequality, and as a consequence the Adams inequality (using~\cite{Mak}) under certain additional
conditions on the parameters $\alpha$ and $\tau$.

\end{document}